\documentclass[a4paper,10pt]{article}

\usepackage{amsmath}
\usepackage{amssymb}
\usepackage{amsthm}
\usepackage{amsrefs}
\usepackage{color}


\newcommand{\NN}{{\mathbb{N}}}
\newcommand{\NNI}{{\NN}_{0}}
\newcommand{\ZZ}{{\mathbb{Z}}}
\newcommand{\RR}{{\mathbb{R}}}
\newcommand{\RRP}{{\RR}_{>0}}


\newcommand{\bs}[1]{{\boldsymbol{#1}}}
\newcommand{\bsa}{\bs{\alpha}}
\newcommand{\bsb}{\bs{\beta}}
\newcommand{\bslm}{\bs{\lambda}}
\newcommand{\bsn}{\bs{\nu}}

\newcommand{\bsr}{\bs{\rho}}

\newcommand{\wbs}[1]{{\widetilde{\bs{#1}}}}
\newcommand{\wbsa}{\wbs{\alpha}}
\newcommand{\wbsb}{\wbs{\beta}}
\newcommand{\wbsl}{\wbs{\lambda}}
\newcommand{\wbsn}{\wbs{\nu}}

\newcommand{\wR}{\widetilde{R}}
\newcommand{\wV}{\widetilde{V}}

\newcommand{\iprd}[2]{{\langle{#1},{#2}\rangle}}
\newcommand{\inrm}[1]{{\lvert{#1}\rvert}}

\newcommand{\iiprd}[2]{{\langle\!\langle{#1},{#2}\rangle\!\rangle}}
\newcommand{\iinrm}[1]{{\lVert{#1}\rVert}}

\newcommand{\AVV}{A(V;V)}
\newcommand{\AVR}{A(V; \RR )}


\newcommand{\map}{\mathop{\mathrm{Map}}\nolimits}
\newcommand{\mult}{\mathop{\mathrm{mult}}\nolimits}



\newcommand{\GL}{{\mathrm{GL}}}

\theoremstyle{plain}
\newtheorem{theo}{Theorem}[section]
\newtheorem{defi}[theo]{Definition}
\newtheorem{prop}[theo]{Proposition}
\newtheorem{lemm}[theo]{Lemma}
\newtheorem{coro}[theo]{Corollary}
\newtheorem{rema}[theo]{Remark}



\begin{document}

\title{A characterization of root systems \\ from the viewpoint of denominator formulae}
\author{%
Hiroki Aoki%
\thanks{aoki\_hiroki\_math@nifty.com,
Faculty of Science and Technology,
Tokyo University of Science.}
\and
Hiraku Kawanoue%
\thanks{kawanoue@fsc.chubu.ac.jp,
College of Science and Engineering,
Chubu University.}
}

\maketitle

\begin{abstract}

Root systems are sets with remarkable symmetries and
therefore they appear in many situations in mathematics.
Among others, denominator formulae of root systems are
very beautiful and mysterious equations which have
several meanings from a variety of disciplines in mathematics.
In this paper, we show a converse statement of this phenomena.
Namely,
for a given finite subset \( S \) of a Euclidean vector space \( V \),
define an equation \( F \) in the group ring \( \ZZ[V] \)
featuring the product part of denominator formulae.
Then, a geometric condition for the support
of \( F \) characterizes \( S \) being a set of positive roots of
a finite/affine root system, recovering the denominator formula.
This gives a novel characterization of the sets of positive roots of
reduced finite/affine root systems.

\end{abstract}

\noindent
{\bf{Keywords:}} root system, denominator formula, Macdonald identity. \\
{\bf{Mathematics Subject Classification:}} 17B22.

\bigskip

\noindent
{\underline{\bf{Notation}}}

\smallskip

\noindent
We denote
by \( \NN \), \( \NNI \), \( \ZZ \), \( \RRP \) and \( \RR \) the
sets of all positive integers, all non-negative integers, all integers,
all positive real numbers and all real numbers, respectively.
For sets \( A , A_1 \) and \( A_2 \), \( A= A_1 \sqcup A_2 \) means
that \( A \) is the disjoint union of \( A_1 \) and \( A_2 \),
namely, both \( A= A_1 \cup A_2 \) and \( A_1 \cap A_2 =
\emptyset \) hold.

\section{Introduction}

Root systems were originally introduced by Killing around 1889,
in order to classify simple Lie algebras over the field of complex numbers.
They are not only fundamental in the theory of Lie groups and Lie algebras,
but also play important roles in many areas in mathematics such as
the singularity theory or the graph theory.

Root systems have several important properties.
Among others,
the Weyl denominator formulae for finite root systems
or
the Weyl-Kac denominator formulae, also known as the Macdonald identities,
for affine root systems are
very beautiful and mysterious equations.
They have several meanings according to the context of various areas
in mathematics, and afford many attractive equations such as
the Jacobi triple product identity.
Let us look at the Weyl denominator formula
of a finite root system \( R \)
\[
 \prod_{\bsa \in R^+} \left( 1- e^{\bsa} \right)
 =
 \sum_{w \in W(R)} ( \det w) e^{ \bsr -w( \bsr )}
\]
more closely.
The left hand side of this formula is
the product of the factors \( ( 1- e^{\bsa} ) \) with \( \bsa \) running
over the elements of the set of positive roots \( R^+ \).
The right hand side is its expansion,
where many terms are cancelling out and surviving terms are
characterized in terms of the Weyl vector \( \bsr \).
Actually, as we see later, the set of exponents in the right hand side
\( \{ \; \bsr -w( \bsr ) \; | \; w \in W(R) \; \} \)
lies on a sphere.

Now relax the condition for the left hand side by \( R^+ \) to be
a finite subset \( S \) of a Euclidean vector space, by allowing to have
positive exponents \( (1- e^{\bs{s}} )^{m( {\bs{s}} )} \) for
each \( {\bs{s}} \in S \), and obtain the equation
\[
\prod_{ {\bs{s}} \in S} \left( 1- e^{\bs{s}} \right)^{m( {\bs{s}})}
= \sum_{\bslm} c_{\bslm} e^{\bslm}.
\]
A natural question is how one can characterize
the denominator formulae among these general equations.
As mentioned above, one necessary condition is that
the set of exponents of the support of the right hand side
\( \{ \; \bslm \; | \; c_{\bslm} \neq 0 \; \} \) lies on a sphere.
Remarkably, it is already a sufficient condition.
Namely, this condition implies that \( m( {\bs{s}} )=1 \) for \(
{\bs{s}} \in S \) and that \( S \) is
the half of the set of all roots of a finite root system.
In other words, it gives a characterization of
the set of positive roots of a finite root system
in terms of the expansion of the equation
featuring the product part of denominator formulae.

The set of positive roots of an affine root system
also possesses a similar characterization.
The condition is almost same to that in the case of a finite root system,
but the set of exponents lies on a paraboloid, instead of a sphere.

\bigskip

This paper consists of three sections.
\S1 is an introduction. The Weyl denominator formula is
discussed to convey the motivation and idea of our work.
\S2 is devoted to the case of finite root systems.
After reviewing the definition of finite root systems,
the statement and the proof of their characterization are given.
\S3 is devoted to the case of affine root systems.
The definition of affine root systems after Macdonald
is given, and their characterization is presented.

\section{The case corresponding to finite root systems}

In this section, we give a characterization of
the sets of positive roots of finite root systems.

\subsection{Finite root systems}

Let \( N \in \NN \) and
let \( V \) be an \( N \)-dimensional real Euclidean space.
We denote
by \( \iprd{\cdot}{\cdot} \) the
standard Euclidean inner product of \( V \) and
by \( \inrm{\cdot} \) the
standard Euclidean norm of \( V \).
For \( \bsa \in V \setminus \{ \bs{0} \} \),
the reflection \( w_{\bsa}  : V \to V \) is
defined by
\[
 w_{\bsa} ( \bs{v} )
 := \bs{v} -
 \frac{2 \iprd{\bsa}{\bs{v}}}{\inrm{\bsa}^2}
 \bsa
 ,
\]
which is an invertible linear transformation
on \( V \), namely \( w_{\bsa} \in \GL (V) \).
Since \( w_{\bsa} ( \bsa )=- \bsa \) and \(
w_{\bsa} ( \bsb )= \bsb \) if \( \iprd{\bsa}{\bsb} =0 \),
we have \( \det w_{\bsa} =-1 \).
Since \( \inrm{w_{\bsa} ( \bs{v} )} = \inrm{\bs{v}} \) for
any \( \bs{v} \in V \), \( w_{\bsa} \) is
an orthogonal transformation of \( V \).
For \( R \subset V \setminus \{ \bs{0} \} \),
we denote by \( W(R) \) the subgroup of \( \GL (V) \) generated by
reflections \( \{ \; w_{\bsa} \; | \; \bsa \in R \; \} \).
\begin{defi}
We say \( R \subset V \setminus \{ \bs{0} \} \) is
a finite root system of rank \( N \) if
\( R \) satisfies the following three conditions:
\begin{description}
\item[(FR1)]
\( R \) spans \( V \) as an \( \RR \)-vector space.
\item[(FR2)]
For any \( \bsa \in R \), \( w_{\bsa} (R)=R \).
\item[(FR3)]
For any \( \bsa , \bsb \in R \),
\( \dfrac{2 \iprd{\bsa}{\bsb}}{\inrm{\bsa}^2} \in \ZZ \).
\end{description}
\end{defi}
\begin{prop}
\label{prop:Ffnt}
If \( R \) is a finite root system,
then the following property holds.
\begin{description}
\item[(FR4)]
\( R \) is finite but nonempty.
\end{description}
Hence \( W(R) \) is a finite group.
\end{prop}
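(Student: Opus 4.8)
The plan is to derive both finiteness and nonemptiness from \textbf{(FR1)} and \textbf{(FR3)} alone, and to treat \( W(R) \) afterwards using \textbf{(FR2)}. Nonemptiness is immediate: since \( N \geq 1 \) we have \( V \neq \{ \bs{0} \} \), and by \textbf{(FR1)} the set \( R \) spans \( V \), so \( R \neq \emptyset \). For finiteness I would exhibit \( R \) as the intersection of a lattice with a bounded ball, establishing discreteness and boundedness separately.

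First I would use \textbf{(FR1)} to select \( \bsa_1, \dots, \bsa_N \in R \) forming a basis of \( V \), and set \( M_0 := \max_i \inrm{\bsa_i} > 0 \). For the discreteness step, write an arbitrary root \( \bsb \in R \) as \( \bsb = \sum_i c_i \bsa_i \). The Gram matrix \( ( \iprd{\bsa_i}{\bsa_j} )_{i,j} \) is invertible by linear independence, hence so is the integer matrix \( C = \bigl( \tfrac{2 \iprd{\bsa_i}{\bsa_j}}{\inrm{\bsa_j}^2} \bigr)_{i,j} \), which is the Gram matrix right-multiplied by the nonzero diagonal matrix \( \mathrm{diag}( 2 / \inrm{\bsa_j}^2 ) \) and whose entries are integers by \textbf{(FR3)}. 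Since \( \tfrac{2 \iprd{\bsb}{\bsa_j}}{\inrm{\bsa_j}^2} = \sum_i c_i C_{ij} \in \ZZ \) for every \( j \), and \( \det C \in \ZZ \setminus \{ 0 \} \), solving this linear system with \( C^{-1} \) forces each \( c_i \) to lie in \( \tfrac{1}{\lvert \det C \rvert} \ZZ \). Thus \( R \) is contained in the lattice \( L := \sum_i \tfrac{1}{\lvert \det C \rvert} \ZZ \, \bsa_i \).

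The heart of the argument — and the step I expect to be the main obstacle — is boundedness, as the pairwise angle constraints of \textbf{(FR3)} do not obviously prevent lengths from growing along long chains of roots. For any \( \bsb \in R \), applying \textbf{(FR3)} to the pair \( ( \bsb , \bsa_i ) \) gives \( \tfrac{2 \iprd{\bsa_i}{\bsb}}{\inrm{\bsb}^2} \in \ZZ \), and by Cauchy--Schwarz its absolute value is at most \( 2 M_0 / \inrm{\bsb} \). Hence if \( \inrm{\bsb} > 2 M_0 \), every such integer vanishes, forcing \( \iprd{\bsa_i}{\bsb} = 0 \) for all \( i \) and therefore \( \bsb = \bs{0} \), a contradiction. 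So \( \inrm{\bsb} \leq 2 M_0 \) for all \( \bsb \in R \). Combining the two steps, \( R \) sits inside the intersection of the lattice \( L \) with the closed ball of radius \( 2 M_0 \), which is finite; together with nonemptiness this yields \textbf{(FR4)}.

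Finally, for the finiteness of \( W(R) \), I would invoke \textbf{(FR2)}: each generating reflection \( w_{\bsa} \) maps \( R \) onto itself, so restriction to \( R \) defines a group homomorphism \( W(R) \to \mathrm{Sym}(R) \) into the symmetric group on the finite set \( R \). This homomorphism is injective, since any \( w \in W(R) \) fixing every element of \( R \) fixes a spanning set of \( V \) by \textbf{(FR1)} and hence equals the identity of \( \GL (V) \). Therefore \( W(R) \) embeds into a finite group and is itself finite.
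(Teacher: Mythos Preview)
Your proof is correct and uses the same ingredients as the paper's --- a basis of \( V \) chosen from \( R \) via \textbf{(FR1)}, the integrality condition \textbf{(FR3)} applied in both directions, and the Cauchy--Schwarz inequality --- but you assemble them differently. The paper combines both directions of \textbf{(FR3)} in a single stroke: for any \( \bsa , \bsb \in R \) one has
\[
\left| \frac{2\iprd{\bsa}{\bsb}}{\inrm{\bsa}^2} \right|
\cdot
\left| \frac{2\iprd{\bsa}{\bsb}}{\inrm{\bsb}^2} \right|
= \frac{4\iprd{\bsa}{\bsb}^2}{\inrm{\bsa}^2\inrm{\bsb}^2} \leqq 4,
\]
and since both factors are integers, each lies in \( [-4,4] \). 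Applying this with \( \bsb = \bsb_j \) running over the chosen basis already places \( R \) inside a finite set of size at most \( 9^N \). Your route instead separates discreteness (the lattice argument via \( C^{-1} \)) from boundedness (the \( \inrm{\bsb} \leqq 2M_0 \) estimate), which is slightly longer but perhaps more transparent about why each property holds. The paper does not spell out the finiteness of \( W(R) \); your embedding into \( \mathrm{Sym}(R) \) via \textbf{(FR2)} and \textbf{(FR1)} is the standard justification and is fine.
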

\begin{proof}
We take \( \bsb_1 , \dots , \bsb_N \in R \) which form
a basis of \( V \) as an \( \RR \)-vector space.
Since
\[
 \left| \dfrac{2 \iprd{\bsa}{\bsb}}{\inrm{\bsa}^2} \right|
 \left| \dfrac{2 \iprd{\bsa}{\bsb}}{\inrm{\bsb}^2} \right|
 \leqq 4
\]
holds for any \( \bsa , \bsb \in R \) by
the Cauchy-Schwarz inequality, {\bf{(FR3)}} implies
\[
 \dfrac{2 \iprd{\bsa}{\bsb}}{\inrm{\bsb}^2} \in
 \ZZ \cap [-4,4]
 .
\]
Thus we have
\[
 R \subset
 \left\{ \: \bsa \in V \; \middle| \;
 \dfrac{2 \iprd{\bsa}{\bsb_j}}{\inrm{\bsb_j}^2} \in
 \ZZ \cap  [-4,4]
 \quad \text{for any} \quad j \in \{ \; 1, \dots ,N \; \} \; \right\}
 .
\]
Because the right hand side is a finite set,
the condition {\bf{(FR4)}} holds.
\end{proof}
We remark that
there are several styles to define root systems.
The standard definition includes the condition {\bf{(FR4)}},
although it follows
from {\bf{(FR1)}} and {\bf{(FR3)}}. (cf.~\cites{Ha1,Hu1})

Here we prepare one elementary lemma,
which is used repeatedly in this section.
\begin{lemm}
\label{lemm:Ftkn}
Let \( S \) be a nonempty subset of \( V \) which is
at most countable and let \( \bs{p} \in V \).
Then there exists a nonzero vector \( \bsn \in V \) such that
\[
 \{ \; \bs{s} \in S \; | \; \iprd{\bs{s}}{\bsn} =0 \; \}
 =
 \RR \bs{p} \cap S
 .
\]
\end{lemm}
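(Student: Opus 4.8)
The plan is to construct \( \bsn \) by a genericity argument carried out inside the hyperplane orthogonal to \( \bs{p} \). First I would set \( H := \{\, \bsn \in V \mid \iprd{\bs{p}}{\bsn} = 0 \,\} \). The advantage of constraining \( \bsn \) to lie in \( H \) is that then \( \iprd{c\bs{p}}{\bsn} = 0 \) for every \( c \in \RR \), so every scalar multiple of \( \bs{p} \) is annihilated by \( \bsn \); in particular \( \RR\bs{p} \cap S \subseteq \{\, \bs{s} \in S \mid \iprd{\bs{s}}{\bsn} = 0 \,\} \) for \emph{any} \( \bsn \in H \). Consequently it suffices to find a nonzero \( \bsn \in H \) with \( \iprd{\bs{s}}{\bsn} \neq 0 \) for every \( \bs{s} \in S \setminus \RR\bs{p} \): for such an \( \bsn \) no element of \( S \) off the line \( \RR\bs{p} \) lies in \( \bsn^{\perp} \), which yields the reverse inclusion and hence the asserted equality.

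Next I would show that each obstruction is a thin set. For \( \bs{s} \in S \setminus \RR\bs{p} \) put \( H_{\bs{s}} := \{\, \bsn \in H \mid \iprd{\bs{s}}{\bsn} = 0 \,\} \). Since \( H^{\perp} = \RR\bs{p} \) and \( \bs{s} \notin \RR\bs{p} \), the vector \( \bs{s} \) is not orthogonal to all of \( H \), so the linear functional \( \bsn \mapsto \iprd{\bs{s}}{\bsn} \) does not vanish identically on \( H \); thus \( H_{\bs{s}} \) is a \emph{proper} linear subspace of \( H \).

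The core of the argument is then a covering estimate. As \( S \) is at most countable, the family \( \{\, H_{\bs{s}} \mid \bs{s} \in S \setminus \RR\bs{p} \,\} \) is at most countable. A real vector space of positive dimension is never the union of an at-most-countable collection of proper linear subspaces: each proper subspace is closed with empty interior, so by the Baire category theorem their union is meager and cannot exhaust \( H \) (equivalently, each is Lebesgue-null, so the countable union is null while \( H \) is not). Adjoining the single point \( \bs{0} \) leaves this conclusion intact. Hence, provided \( \dim H \geq 1 \), I may choose \( \bsn \in H \setminus \bigl( \{ \bs{0} \} \cup \bigcup_{\bs{s}} H_{\bs{s}} \bigr) \); this \( \bsn \) is nonzero and satisfies \( \iprd{\bs{s}}{\bsn} \neq 0 \) for all \( \bs{s} \in S \setminus \RR\bs{p} \), completing the construction.

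I expect the covering step to be the main obstacle, since it is the only place where the countability of \( S \) is genuinely used — for uncountable \( S \) (for instance \( S = V \)) the union of hyperplanes can fill \( H \) and the conclusion fails. The one remaining point requiring care is the dimension bookkeeping: \( \dim H = N \) when \( \bs{p} = \bs{0} \) and \( \dim H = N - 1 \) when \( \bs{p} \neq \bs{0} \), so the covering argument needs \( \dim H \geq 1 \). This holds in all cases except \( N = 1 \) with \( \bs{p} \neq \bs{0} \), where \( H = \{ \bs{0} \} \) admits no nonzero vector; that degenerate configuration must be excluded or handled through the hypotheses under which the lemma is applied.
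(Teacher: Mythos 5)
Your proof is correct and is essentially the paper's own argument: restrict to the orthogonal complement \( V_{\bs{p}} := \{\, \bs{v} \in V \mid \iprd{\bs{p}}{\bs{v}} = 0 \,\} \), observe that each \( \bs{s} \in S \setminus \RR\bs{p} \) cuts out a proper linear subspace of \( V_{\bs{p}} \), and use countability to pick a nonzero \( \bsn \in V_{\bs{p}} \) avoiding all of them (the paper merely asserts the covering step that you justify via Baire category/measure). Your closing caveat is moreover a sharp observation about the paper itself, not a flaw in your write-up: the paper's proof carries the same implicit requirement \( \dim V_{\bs{p}} \geq 1 \), and the lemma as stated does fail when \( N = 1 \), \( \bs{p} \neq \bs{0} \), and \( S \) contains a nonzero vector, since then the required \( \bsn \) would have to be \( \bs{0} \).
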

\begin{proof}
For \( \bs{t} \in V \), we denote by \( V_{\bs{t}} :=
\{ \; \bs{v} \in V \; | \; \iprd{\bs{t}}{\bs{v}} =0 \; \}
\) the orthogonal complement of \( \RR \bs{t} \).
For each \( \bs{s} \in S \setminus \RR \bs{p} \), \(
V_{\bs{p}} \cap V_{\bs{s}} \) is
a subspace of \( V_{\bs{p}} \) with codimension \( 1 \).
Since \( S \) is at most countable, \(
\bigcup_{ \bs{s} \in S \setminus \RR \bs{p} }
( V_{\bs{p}} \cap V_{\bs{s}} ) \) is
a proper subset of \( V_{\bs{p}} \).
Hence we can take
a nonzero vector \( \bsn \in V_{\bs{p}} \) from its complement.
\end{proof}
Let \( R \) be a finite root system.
We take a nonzero vector \( \bsn \in V \) according
to {\bf{Lemma \ref{lemm:Ftkn}}} with \( S=R \) and \( \bs{p}
= \bs{0} \).
We denote by
\[
 R^+ := \{ \; \bsa \in R \; | \; \iprd{\bsa}{\bsn} >0 \; \}
\]
the set of all positive roots.
We remark that \( R= R^+ \sqcup (- R^+ ) \) holds,
where \( - R^+ := \{ \; - \bsa \; | \; \bsa \in R^+ \; \} \).
Let
\[
 B(R):= R^+ \setminus
 \{ \; \bsa \in R^+ \; | \; \exists \bsa_1 , \bsa_2 \in R^+
 \; \mathrm{s.t.} \; \bsa = \bsa_1 + \bsa_2 \; \}
\]
be a basis for \( R \).
This \( B(R) \) is uniquely determined by \( R^+ \) but
depends on the choice of \( R^+ \).
\begin{defi}
We say a finite root system \( R \) is reduced
if \( R \) satisfies the following condition:
\begin{description}
\item[(FR5)]
For any \( \bsa \in R \), \( \RR \bsa \cap R = \{ \bsa , - \bsa \} \).
\end{description}
\end{defi}
Let \( R \) be a reduced finite root system.
For \( w \in W(R) \), let
\[
 R(w) := \{ \; \bsa \in R^+ \; | \; w( \bsa ) \not\in R^+ \; \}
 \quad \text{and} \quad
 s(w) := \sum_{\bsa \in R(w)} \bsa
 ,
\]
where we define \( s(w)= \bs{0} \) if \( R(w)= \emptyset \).
Then, the following equation, which is called
the Weyl denominator formula, holds:
\[
 \prod_{\bsa \in R^+} \left( 1- e^{\bsa} \right)
 =
 \sum_{w \in W(R)} ( \det w) e^{s(w)}
 .
\]
This is a formal equation
with multiplication \( e^{\bs{v}_1} \cdot e^{\bs{v}_2} =
e^{\bs{v}_1 + \bs{v}_2} \),
where \( e^{\bs{v}} \) is a formal symbol for each \( \bs{v} \in V \) and
we identify \( e^{\bs{0}}=1 \).
It is easy to see that \( s(w)= \bsr - w( \bsr ) \),
where
\[
 \bsr := \frac{1}{2} \sum_{\bsa \in R^+} \bsa
\]
is the Weyl vector.

\subsection{Statement for the finite case}

Let \( m \in \map (V, \NNI ) \), namely,
\( m \) is a map from \( V \) to \( \NNI \).
Here we assume that \( m \) satisfies
the following properties {\bf{(F)}}:
\[
 \text{\bf{(F)}} \quad
 \left\{
 \begin{array}{l}
 \bullet \;
 m( \bs{0} )=0.
 \\[4pt]
 \bullet \;
 \text{The support \( S(m):= 
 \{ \; \bs{v} \in V \; | \; m( \bs{v} ) \neq 0 \; \} 
 \) is a nonempty finite set.}
 \\[4pt]
 \bullet \;
 \text{\( S(m) \) spans \( V \) as an \( \RR \)-vector space.}
 \end{array}
 \right.
 \hspace*{-8pt}
\]
We define
\[
 F(m) := \prod_{\bs{s} \in S(m)}
 \left( 1- e^{\bs{s}} \right)^{m( \bs{s} )}
\]
and let \( \Lambda (m) \) be the set of vectors which appear in
the expansion of \( F(m) \), namely,
\[
 \Lambda (m) := \left\{ \; \bslm \in V \; \middle| \;
 c_{\bslm} \neq 0 \; \right\}
 \qquad \left(
 F(m)= \sum_{\bslm} c_{\bslm} e^{\bslm}
 \right)
 .
\]
Our assertion in this section is as follows.
\begin{theo}
\label{theo:Faim}
Under the assumption {\bf{(F)}},
the following two conditions on \( m \) are equivalent:
\begin{description}
\item[(F1)]
\( \Lambda (m) \) is a subset of a sphere.
Namely, there exist
a centre \( \bs{c} \in V \) and a radius \( r>0 \) such that \(
\inrm{\bslm - \bs{c}}=r \) holds for
any \( \bslm \in \Lambda (m) \).
\item[(F2)]
\( S(m) \cap (-S(m)) = \emptyset \),
\( R:= S(m) \sqcup (-S(m)) \) is
a reduced finite root system of rank \( N \) and \(
m( \bs{s} )=1 \) for any \( \bs{s} \in S(m) \).
\end{description}
\end{theo}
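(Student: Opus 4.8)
The plan is to prove the two implications separately, obtaining \textbf{(F2)} $\Rightarrow$ \textbf{(F1)} directly from the Weyl denominator formula and reserving the real work for \textbf{(F1)} $\Rightarrow$ \textbf{(F2)}.

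For \textbf{(F2)} $\Rightarrow$ \textbf{(F1)}, I would first convert $S(m)$ into a genuine set of positive roots. Choose a generic $\bsn$ so that $R^+:=\{\bsa\in R:\iprd{\bsa}{\bsn}>0\}$ is a positive system, put $T:=\{\bs{s}\in S(m):-\bs{s}\in R^+\}$ and $\bs{t}:=\sum_{\bs{s}\in T}\bs{s}$. Since $m(\bs{s})=1$ for all $\bs{s}$, rewriting $1-e^{\bs{s}}=-e^{\bs{s}}(1-e^{-\bs{s}})$ for each $\bs{s}\in T$ and using $(S(m)\setminus T)\sqcup\{-\bs{s}:\bs{s}\in T\}=R^+$ gives $F(m)=(-1)^{|T|}e^{\bs{t}}\prod_{\bsa\in R^+}(1-e^{\bsa})$. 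The denominator formula then yields $\Lambda(m)=\{\bs{t}+\bsr-w(\bsr):w\in W(R)\}$, and as each $w$ is orthogonal, $\inrm{(\bs{t}+\bsr-w(\bsr))-(\bs{t}+\bsr)}=\inrm{w(\bsr)}=\inrm{\bsr}$. Hence $\Lambda(m)$ lies on the sphere of centre $\bs{t}+\bsr$ and radius $\inrm{\bsr}>0$, so the substantive content is the converse.

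For \textbf{(F1)} $\Rightarrow$ \textbf{(F2)}, the starting point is that the Newton polytope of $F(m)$ is the zonotope $Z=\sum_{\bs{s}\in S(m)}[\bs{0},m(\bs{s})\bs{s}]$, whose vertices biject with the chambers of the arrangement $\{\bs{s}^{\perp}:\bs{s}\in S(m)\}$: a generic $\bsn$ gives the vertex $\sum_{\iprd{\bs{s}}{\bsn}>0}m(\bs{s})\bs{s}$, whose coefficient in $F(m)$ is a nonzero product of signs, so every vertex lies in $\Lambda(m)$, hence on the sphere. From this I extract $m\equiv 1$ and reducedness: along the edge of $Z$ in a direction $\bs{s}_0$ the monomials $\gamma,\gamma-\bs{s}_0,\dots,\gamma-m(\bs{s}_0)\bs{s}_0$ occur with nonzero binomial coefficients, producing $m(\bs{s}_0)+1$ collinear points of $\Lambda(m)$; since a sphere meets a line in at most two points this forces $m(\bs{s}_0)=1$. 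The same collinearity argument applied to any pair of proportional elements shows that no two distinct elements of $S(m)$ are proportional, in particular $S(m)\cap(-S(m))=\emptyset$, which is precisely \textbf{(FR5)} for $R:=S(m)\sqcup(-S(m))$ together with the disjointness of the decomposition.

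The key geometric lemma is that two vertices adjacent across a wall $\bs{s}_0^{\perp}$ are related by the reflection $w_{\bs{s}_0}$ after centring at $\bs{c}$: if $\gamma$ and $\gamma-\bs{s}_0$ both lie on the sphere, then $\inrm{\gamma-\bs{c}}^2=\inrm{(\gamma-\bs{c})-\bs{s}_0}^2$ forces $\iprd{\gamma-\bs{c}}{\bs{s}_0}=\tfrac12\inrm{\bs{s}_0}^2$, whence $w_{\bs{s}_0}(\gamma-\bs{c})=(\gamma-\bs{c})-\bs{s}_0$. Applying Lemma \ref{lemm:Ftkn} with $\bs{p}=\bs{s}_0$ realizes every such wall-crossing explicitly, so the centred vertex set is permuted by the wall-reflections and, by connectivity of the chambers of an arrangement, lies in a single $W(R)$-orbit, with $\bs{c}$ in the role of the Weyl vector. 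What remains is to upgrade these local reflection symmetries to the global axioms \textbf{(FR2)} and \textbf{(FR3)}, namely $w_{\bsa}(R)=R$ and $2\iprd{\bsa}{\bsb}/\inrm{\bsa}^2\in\ZZ$ for all $\bsa,\bsb\in R$; I expect this to be the main obstacle, the point being that the arrangement of $S(m)$ must in fact be the reflection arrangement of a \emph{finite} reflection group and $R$ its root system. My plan is to isolate a single root direction $\bs{s}_0$ via Lemma \ref{lemm:Ftkn}, compare the vertices on the two sides of $\bs{s}_0^{\perp}$, and bootstrap the reflection relation above---using finiteness of the chamber set and the rigidity of points constrained to a sphere---to force $w_{\bs{s}_0}$ to preserve $R$ and the Cartan integrality to hold. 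Once \textbf{(FR2)} is established, \textbf{(FR1)} and \textbf{(FR4)} are immediate, $R$ is a reduced finite root system, and the computation of the second paragraph identifies $\bs{c}$ as the Weyl vector, recovering the denominator formula.
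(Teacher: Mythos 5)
Your direction \textbf{(F2)}$\Rightarrow$\textbf{(F1)} is correct and essentially the paper's argument (the paper uses Lemma \ref{lemm:Fr2s} to reduce to $S=R^{+}$; your explicit factorization $F(m)=(-1)^{|T|}e^{\bs{t}}\prod_{\bsa\in R^{+}}(1-e^{\bsa})$ is the same reduction done by hand). Your opening of \textbf{(F1)}$\Rightarrow$\textbf{(F2)} is also sound and is a genuinely different route to what the paper proves in Lemma \ref{lemm:Fred}: vertices of the zonotope $Z=\sum_{\bs{s}}[\bs{0},m(\bs{s})\bs{s}]$ have coefficient $\pm1$ in $F(m)$, edges of $Z$ produce collinear points of $\Lambda(m)$, and a sphere meets a line in at most two points, forcing $m\equiv 1$ and no proportional pairs. (One caveat you gloss over: when several elements of $S(m)$ are proportional, possibly with irrational ratios and mixed signs, the ``edge polynomial'' is $\prod_i(1-t^{c_i})^{m_i}$ with real exponents $c_i$, and you must argue it has at least three nonzero terms unless it is a single factor with exponent $1$; this is true --- compare the constant term, the term of minimal positive exponent, and the top term --- but it is not literally a binomial-coefficient statement.)

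The genuine gap is that \textbf{(FR2)} and \textbf{(FR3)} are never proved: you explicitly defer them (``I expect this to be the main obstacle \dots my plan is \dots''), and they are exactly where all the difficulty of the theorem sits (the paper's Lemmas \ref{lemm:Fint} and \ref{lemm:Fcls}). Your wall-crossing lemma only relates \emph{vertices} of $Z$, i.e.\ the aggregate sums $\sum_{\iprd{\bs{s}}{\bsn}>0}m(\bs{s})\bs{s}$, across a wall $\bs{s}_0^{\perp}$; it says nothing about an individual $\bsb\in S(m)$, which is in general not a difference of vertices, and the group generated by the $w_{\bs{s}_0}$ is not yet known to be finite or to have the arrangement of $S(m)$ as its reflection arrangement, so ``connectivity of chambers'' does not by itself propagate the symmetry to $S(m)$ or yield integrality. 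The paper's mechanism, which has no counterpart in your proposal, is a minimal-counterexample induction on the height $\iprd{\bsb}{\bsn}$: for the lowest bad $\bsb$ one studies the line $L=\bsb+\RR\bsa$ and the identity (\ref{eq:Fcls}), which isolates the part of $F(m)$ with exponents in $L$ as a sum of contributions from subsets $T\subset S_1$ (elements of smaller height, whose Cartan integers with $\bsa$ are already integral by minimality) and from $S_2=S(m)\cap L$; integrality of the $S_1$-contributions shows $e^{\bsb}$ cannot cancel, hence $\bsb\in\Lambda(m)$, and then sphere-meets-line-in-two-points plus the relation $2\iprd{\bsa}{\bs{c}}=\inrm{\bsa}^2$ forces $2\iprd{\bsa}{\bsb}/\inrm{\bsa}^2=1-k\in\ZZ$; a variant of the same identity gives reflection-closure of $S_2$, hence \textbf{(FR2)}. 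Without this (or some replacement for it) the proposal proves only reducedness and $m\equiv1$, not that $R$ is a root system, so it does not prove the theorem.
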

Here we prepare one lemma for later use.
\begin{lemm}
\label{lemm:Fr2s}
Let \( \bsb \in S(m) \) and \( m^{\prime} \) be
a map which is defined by
\[
 m^{\prime} ( \bs{v} ):=
 \left\{ \begin{array}{ll} m( \bs{v} )-1 & ( \bs{v} = \bsb ) \\
 m( \bs{v} )+1 & ( \bs{v} =- \bsb ) \\
 m( \bs{v} ) & (\text{otherwise}) \end{array} \right.
 .
\]
Then we have \( \Lambda ( m^{\prime} ) = \Lambda (m) - \bsb
:= \left\{ \; \bslm - \bsb \; \middle| \;
\bslm \in \Lambda (m) \; \right\} \).
Thus the condition {\bf{(F1)}} holds for \( m^{\prime} \) if
and only if the condition {\bf{(F1)}} holds for \( m \).
\end{lemm}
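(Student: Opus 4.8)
The plan is to show that $F(m')$ equals $F(m)$ multiplied by a unit of the group ring $\ZZ[V]$ that shifts all exponents by $-\bsb$; the description of $\Lambda(m')$ then follows immediately, and the invariance of condition {\bf{(F1)}} under translation is manifest.

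First I would isolate the factors of $F(m)$ that are altered. Since $\bsb \in S(m)$ we have $m(\bsb) \geq 1$, so $m'(\bsb) = m(\bsb)-1 \in \NNI$ and $m'$ is a legitimate element of $\map(V,\NNI)$. Setting
\[
 G := (1-e^{\bsb})^{m(\bsb)-1}(1-e^{-\bsb})^{m(-\bsb)}
 \prod_{\bs{s} \in S(m) \setminus \{ \bsb , - \bsb \}}
 (1-e^{\bs{s}})^{m(\bs{s})} \in \ZZ[V],
\]
I can write $F(m) = (1-e^{\bsb})\,G$. By the definition of $m'$, exactly the exponents at $\bsb$ and $-\bsb$ change — decreased and increased by one, respectively — while every other factor is left untouched, so $F(m') = (1-e^{-\bsb})\,G$ with the \emph{same} $G$. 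Phrasing the argument this way keeps everything inside $\ZZ[V]$ and avoids any division.

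Next I would record the elementary identity $1 - e^{-\bsb} = -e^{-\bsb}(1-e^{\bsb})$, which follows from $e^{-\bsb}e^{\bsb} = e^{\bs{0}} = 1$. Substituting it into $F(m') = (1-e^{-\bsb})\,G$ yields $F(m') = -e^{-\bsb}(1-e^{\bsb})\,G = -e^{-\bsb}\,F(m)$. Writing $F(m) = \sum_{\bslm} c_{\bslm}\, e^{\bslm}$, this gives $F(m') = \sum_{\bslm} (-c_{\bslm})\, e^{\bslm - \bsb}$, so the coefficient of $e^{\bsm}$ in $F(m')$ is $-c_{\bsm + \bsb}$.

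Finally, because $-1$ is a unit of $\ZZ$, multiplication by the unit $-e^{-\bsb}$ neither creates nor cancels any monomial: the coefficient $-c_{\bsm+\bsb}$ is nonzero precisely when $\bsm + \bsb \in \Lambda(m)$, whence $\Lambda(m') = \Lambda(m) - \bsb$. For the last claim, if $\Lambda(m)$ lies on the sphere of centre $\bs{c}$ and radius $r$, then $\Lambda(m') = \Lambda(m) - \bsb$ lies on the sphere of centre $\bs{c} - \bsb$ and the same radius $r$, and conversely; so {\bf{(F1)}} holds for $m'$ if and only if it holds for $m$. There is no genuine obstacle in this argument: the only points demanding care are the bookkeeping in the first step — verifying that removing one factor $(1-e^{\bsb})$ and inserting one factor $(1-e^{-\bsb})$ leaves the common part $G$ unchanged — and the observation that scaling by the unit $-e^{-\bsb}$ preserves supports exactly.
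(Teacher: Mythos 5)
Your proposal is correct and takes essentially the same route as the paper: both rest on the identity \( F(m^{\prime}) = -e^{-\bsb} F(m) \), obtained via \( 1-e^{-\bsb} = -e^{-\bsb}\left(1-e^{\bsb}\right) \), followed by the observation that translating \( \Lambda(m) \) by \( -\bsb \) preserves the sphere condition. Your only deviation is bookkeeping: by exhibiting the common factor \( G \) explicitly you avoid the paper's implicit cancellation of \( \left(1-e^{\bsb}\right) \) from the relation \( \left(1-e^{\bsb}\right)F(m^{\prime}) = \left(1-e^{-\bsb}\right)F(m) \), which is a slight gain in rigor but not a different argument.
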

\begin{proof}
Since
\[
 \left( 1- e^{\bsb} \right) F ( m^{\prime} )
 =
 \left( 1- e^{- \bsb} \right) F(m)
 ,
\]
we have
\[
 F ( m^{\prime} )= - e^{- \bsb} F(m)
\]
and therefore
\( \Lambda ( m^{\prime} ) = \Lambda (m) - \bsb \).
\end{proof}
\subsection{Proof for the finite case}

The proof for {\bf{(F2)}}\( \Rightarrow \){\bf{(F1)}} is easy.
Let \( R \) be a reduced finite root system of rank \( N \) and
let \( S \) be a subset of \( R \) satisfying \( R=S \sqcup (-S) \).
Let \( m_S \) be a map defined by
\[
 m_S ( \bs{v} ) := \left\{ \begin{array}{lc} 1 & ( \bs{v} \in S) \\
 0 & ( \bs{v} \not\in S) \end{array} \right.
 .
\]
What we need to do here is to show
that the condition {\bf{(F1)}} holds for this \( m_S \).
By {\bf{Lemma \ref{lemm:Fr2s}}},
we may assume \( S= R^+ \).
Then, by the Weyl denominator formula, we have
\[
 \Lambda ( m_S ) \subset
 \left\{ \; \bsr - w( \bsr ) \; \middle| \; w \in W(R) \; \right\}
 .
\]
Since \( w \) is an isometry,
we have \( \inrm{w ( \bsr )} = \inrm{\bsr} \).
This means that \( \Lambda ( m_S ) \) is a subset of the sphere
with its centre \( \bsr \) and its radius \( \inrm{\bsr} \).
This completes the proof of {\bf{(F2)}}\( \Rightarrow \){\bf{(F1)}}.

\bigskip

We show {\bf{(F1)}}\( \Rightarrow \){\bf{(F2)}} step by step.
Throughout the proof, the key is the fact that the intersection
of a sphere and a line has at most two points.
First, we show {\bf{(FR5)}}.
\begin{lemm}
\label{lemm:Fred}
Assume {\bf{(F1)}} and let \( \bsa \in S(m) \).
Then we have \( \RR \bsa \cap S(m) = \{ \bsa \} \) and \(
m( \bsa )=1 \).
\end{lemm}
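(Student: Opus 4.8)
The plan is to localize the equation \( F(m) \) along the line \( \RR\bsa \): on such a line the geometric hypothesis {\bf(F1)} turns into a statement about a one–variable Laurent polynomial, and the multiplicity \( m(\bsa) \) can be read off from its order of vanishing at a single point. First I would factor
\(
 F(m) = P\cdot Q,
\)
where \( P := \prod_{\bs s \in \RR\bsa \cap S(m)} (1-e^{\bs s})^{m(\bs s)} \) collects exactly the factors whose exponents lie on \( \RR\bsa \), and \( Q \) is the product of the remaining factors. Writing each such \( \bs s = t\bsa \) with \( t \in \RR\setminus\{0\} \) and regarding \( e^{\bsa} \) as a positive real variable \( x \), the factor \( P \) becomes a product \( \prod_i (1-x^{t_i})^{m_i} \) of real-exponent binomials, whose total exponent \( M := \sum_i m_i = \sum_{\bs s \in \RR\bsa \cap S(m)} m(\bs s) \ge 1 \) is precisely the quantity I want to bound.

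Next I would isolate the part of \( F(m) \) lying on a line parallel to \( \RR\bsa \). Fix any monomial \( e^{\bsn_0} \) occurring in \( Q \) (which exists since \( \ZZ[V] \) is an integral domain and \( Q\neq 0 \)), and consider the line \( \ell := \bsn_0 + \RR\bsa \). Since every exponent of \( P \) lies in \( \RR\bsa \), a monomial \( \bs\mu + \bsnu \) of \( F(m) \) (with \( \bs\mu \) from \( P \) and \( \bsnu \) from \( Q \)) lies on \( \ell \) if and only if \( \bsnu \) already lies on \( \ell \); hence the restriction of \( F(m) \) to \( \ell \) equals \( P \) times the sum \( Q_\ell \) of the monomials of \( Q \) on \( \ell \), and \( Q_\ell \neq 0 \) because it contains \( e^{\bsn_0} \). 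Parametrizing \( \ell \) by \( x = e^{\bsa} \), this restriction is (up to the constant shift \( e^{\bsn_0} \)) a nonzero real-exponent Laurent polynomial \( P(x)\hat Q(x) \), and its support is exactly \( \Lambda(m)\cap\ell \). Now the hypothesis enters through the stated key fact that a line meets the sphere in at most two points, so by {\bf(F1)} the set \( \Lambda(m)\cap\ell \) has at most two elements; equivalently \( P(x)\hat Q(x) \) has at most two terms.

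Finally I would compare orders of vanishing at \( x=1 \). On the one hand \( 1-x^{t} \) has a simple zero at \( x=1 \) for every \( t\neq 0 \), so \( P \) vanishes there to order exactly \( M \) and \( \hat Q \) to some order \( \ge 0 \), whence \( P\hat Q \) vanishes at \( x=1 \) to order at least \( M \). On the other hand, a nonzero Laurent polynomial in \( x \) with at most two terms can only have a simple zero at \( x=1 \): factoring out a power of \( x \) reduces it to \( c_1 x^d + c_2 \) with \( d\neq 0 \), and this expression, wherever it vanishes, does so to first order. Therefore \( M\le 1 \), forcing \( M=1 \). Since each \( m_i\ge 1 \), there is a single factor, i.e. \( \RR\bsa \cap S(m) \) is one point of multiplicity \( 1 \); as \( \bsa \) belongs to this set, that point is \( \bsa \) and \( m(\bsa)=1 \), as required.

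The step I expect to be the main obstacle is the control of the off-line factor \( Q \): a priori its terms may contribute several points on the chosen line, and these could cancel against terms of \( P \), so one cannot simply assert that \( P \) itself has at most two terms (indeed \( M\ge 2 \) forces \( P \) to have at least three terms, but \( \hat Q \) could annihilate the surplus). The device of passing to the order of vanishing at \( x=1 \) is what circumvents this, since it needs only a lower bound supplied by \( P \) and is insensitive to the extra terms coming from \( Q \), which can only increase the order.
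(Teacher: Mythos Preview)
Your argument is correct and runs parallel to the paper's, but with a genuine difference in execution. The paper first applies Lemma~\ref{lemm:Fr2s} to flip signs so that $\iprd{\bs{s}}{\bsn}\ge 0$ for every $\bs{s}\in S(m)$, with equality exactly on $\RR\bsa\cap S(m)$; after this normalization the slice $\Lambda_0=\Lambda(m)\cap\{\iprd{\cdot}{\bsn}=0\}$ coincides with the support of $F(m|_{\RR\bsa})$ (your $P$) itself, so $P$ already has at most two monomials and the conclusion is read off directly. You bypass the normalization: you let the off-line factor $Q$ contribute along the line $\ell=\bsn_0+\RR\bsa$, obtaining only that $P\cdot\hat Q$ has at most two terms, and then isolate $M\le 1$ via the order of vanishing at $x=1$. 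The paper's reduction buys a cleaner combinatorial picture inside the group ring (indeed, your order-of-vanishing computation is exactly what makes the paper's terse final sentence ``which should be $\bs{0}$ and $\bsa$'' precise); your route buys freedom from Lemma~\ref{lemm:Fr2s} and a robust analytic device that separates the contribution of $P$ from whatever noise $Q$ deposits on the line.
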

\begin{proof}
We take a nonzero vector \( \bsn \in V \) according
to {\bf{Lemma \ref{lemm:Ftkn}}} with \( S=S(m) \) and \(
\bs{p} = \bsa \).
By {\bf{Lemma \ref{lemm:Fr2s}}},
we may assume \( \iprd{\bs{s}}{\bsn} \geqq 0 \) for
any \( \bs{s} \in S(m) \) without
changing \( \RR \bsa \cap S(m) \).
Then, because
\[
 S_0 :=
 \{ \; \bs{s} \in S(m) \; | \; \iprd{\bs{s}}{\bsn} =0 \; \}
 = \RR \bsa \cap S(m)
 ,
\]
we have
\[
 \Lambda_0 :=
 \{ \; \bslm \in \Lambda (m) \; | \; \iprd{\bslm}{\bsn} =0 \; \}
 = \Lambda ( m |_{\RR \bsa} )
 ,
\]
where \( m |_{\RR \bsa} \in \map ( \RR \bsa , \NNI )\) is
the restriction of \( m \) to
the subspace \( \RR \bsa \subset V \).
Hence \( \Lambda_0 \) is on the line \( \RR \bsa \).
On the other hand,
since \( \Lambda_0 \subset \Lambda (m) \), \( \Lambda_0 \) is
on a sphere, by the assumption {\bf{(F1)}}.
Therefore, \( \Lambda_0 \) is on the intersection
of the line and the sphere, which has at most two points.
However, since \( \bsa \in S_0 \), \( \Lambda_0 \) has
at least two points.
Consequently, \( \Lambda_0 \) has
exactly two points, which should be \( \bs{0} \) and \( \bsa \).
This means \( S_0 = \RR \bsa \cap S(m) = \{ \bsa \} \) and \(
m( \bsa )=1 \).
\end{proof}
Next, we show the {\bf{(FR3)}}.
\begin{lemm}
\label{lemm:Fint}
Assume {\bf{(F1)}} and let \( \bsa \in S(m) \).
Then we have \(
\frac{2 \iprd{\bsa}{\bsb}}{\inrm{\bsa}^2}
\in \ZZ \) for any \( \bsb \in S(m) \).
\end{lemm}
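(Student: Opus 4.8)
The plan is to extract the integer \( \frac{2\iprd{\bsa}{\bsb}}{\inrm{\bsa}^2} \) from the spacing of the (at most two) points in which a line parallel to \( \RR\bsa \) meets \( \Lambda(m) \), in the same spirit as the earlier proofs. First I would pass to a convenient frame. Applying {\bf{Lemma \ref{lemm:Ftkn}}} with \( \bs{p}=\bsb \) (the case \( \bsb\in\RR\bsa \) being trivial by {\bf{Lemma \ref{lemm:Fred}}}) I choose \( \bsn \) isolating \( \RR\bsb \), generic so that \( \iprd{\bsa}{\bsn}\neq0 \), and after replacing \( \bsn \) by \( -\bsn \) if needed I arrange \( \iprd{\bsa}{\bsn}>0 \). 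Using {\bf{Lemma \ref{lemm:Fr2s}}} I flip the signs of those \( \bs{s} \) with \( \iprd{\bs{s}}{\bsn}<0 \), so that \( \iprd{\bs{s}}{\bsn}\ge0 \) for all \( \bs{s}\in S(m) \), with equality only at \( \bsb \), and with \( \bsa \) left unflipped. Since a sign flip merely translates \( \Lambda(m) \) and replaces \( (\bsa,\bsb) \) by \( (\pm\bsa,\pm\bsb) \), neither {\bf{(F1)}} nor the integrality assertion is affected. In this frame \( S(m) \) lies in the closed half-space \( \{\iprd{\cdot}{\bsn}\ge0\} \), so by the computation in {\bf{Lemma \ref{lemm:Fred}}} the points of \( \Lambda(m) \) on \( \RR\bsb \) are exactly \( \{\bs{0},\bsb\} \); in particular \( \bs{0}\in\Lambda(m) \) and \( \bsb\in\Lambda(m) \).

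The heart of the matter is a finiteness lemma forcing integer spacing. By {\bf{Lemma \ref{lemm:Fred}}} we have \( m(\bsa)=1 \), so \( F(m)=(1-e^{\bsa})\,G \) with \( G:=\prod_{\bs{s}\neq\bsa}(1-e^{\bs{s}})^{m(\bs{s})} \) of finite support. Fix a line \( \ell=\bs{v}_0+\RR\bsa \) and let \( d \) denote the coefficient function of \( G \) along \( \ell \); then the coefficient of \( F(m) \) at a point of \( \ell \) is \( d(\cdot)-d(\cdot-\bsa) \). Partition \( \ell \) into cosets modulo \( \ZZ\bsa \). On each coset \( d \) restricts to a finitely supported integer sequence, and the surviving exponents are precisely its consecutive jumps; the number of jumps can never equal \( 1 \), since a single jump cannot return a finitely supported sequence to \( 0 \), so on each coset it is \( 0 \) or \( \ge2 \). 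As a line meets a sphere in at most two points, {\bf{(F1)}} gives at most two survivors on \( \ell \) in total. Hence either \( \ell\cap\Lambda(m)=\emptyset \), or all survivors lie on a single coset and there are exactly two of them, so that the two points of \( \ell\cap\Lambda(m) \) differ by an integer multiple of \( \bsa \).

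I then combine this with the sphere of {\bf{(F1)}}, with centre \( \bs{c} \). A direct computation shows that for every line parallel to \( \bsa \) the midpoint of its chord has \( \bsa \)-coordinate \( \iprd{\bsa}{\bs{c}}/\inrm{\bsa}^2 \), independent of the line. Applying the spacing lemma to \( \RR\bsa \ni \bs{0} \) produces a second point \( k\bsa\in\Lambda(m) \) with \( k\in\ZZ \), whence \( 2\iprd{\bsa}{\bs{c}}/\inrm{\bsa}^2=k \); applying it to the line through \( \bsb\in\Lambda(m) \) produces the partner \( \bsb+t\bsa\in\Lambda(m) \) with \( t\in\ZZ \), whence \( 2\iprd{\bsa}{\bsb}/\inrm{\bsa}^2+t=2\iprd{\bsa}{\bs{c}}/\inrm{\bsa}^2=k \). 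Therefore \( \frac{2\iprd{\bsa}{\bsb}}{\inrm{\bsa}^2}=k-t\in\ZZ \), as required.

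I expect the finiteness lemma of the second paragraph to be the main obstacle: the clean points to nail down are the exclusion of exactly one jump and the bookkeeping showing that the two survivors are forced onto a common coset, so that their separation is an integer multiple of \( \bsa \). Granting that together with {\bf{Lemmas \ref{lemm:Ftkn}, \ref{lemm:Fr2s}, \ref{lemm:Fred}}} and the at-most-two-points property, the remaining steps are routine.
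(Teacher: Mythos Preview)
Your argument is correct and takes a genuinely different route from the paper's.

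The paper isolates \(\bsa\) rather than \(\bsb\): it chooses \(\bsn\) with \(\iprd{\bsa}{\bsn}=0\) and \(\iprd{\bs{s}}{\bsn}>0\) for all other \(\bs{s}\in S(m)\), then argues by contradiction, taking a \emph{minimal} counterexample \(\bsb\) with respect to \(\iprd{\cdot}{\bsn}\). That minimality is what forces integrality there: on the line \(L=\bsb+\RR\bsa\) the paper writes the restricted expansion explicitly as a formula \eqref{eq:Fcls} with two pieces, and uses that every \(\bs{s}\in S_1\) already satisfies the integrality (by minimality of \(\bsb\)) to pin the second survivor to \(\bsb+k\bsa\) with \(k\in\ZZ\). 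Your coset/jump-count lemma replaces this inductive bookkeeping entirely: factoring out a single \((1-e^{\bsa})\) and observing that a finitely supported sequence cannot have exactly one jump yields integer spacing on \emph{every} line parallel to \(\bsa\) at once, with no induction and no minimal-counterexample step. You then read off the integer via the constant \(\bsa\)-projection of chord midpoints, applied to the two lines through \(\bs{0}\) and through \(\bsb\); the paper instead uses \(\{ \bs{0},\bsa\}\subset\Lambda(m)\) to get \(2\iprd{\bsa}{\bs{c}}=\inrm{\bsa}^2\) directly and works with the single line through \(\bsb\). What the paper's approach buys is the explicit formula \eqref{eq:Fcls}, which it reuses verbatim in the proof of {\bf Lemma~\ref{lemm:Fcls}}; what your approach buys is a self-contained, induction-free proof of integer spacing that applies uniformly to all lines in the \(\bsa\)-direction.
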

\begin{proof}
We take a nonzero vector \( \bsn \in V \)
as in the previous proof.
Then we may assume \( \iprd{\bsa}{\bsn} =0 \) and \(
\iprd{\bs{s}}{\bsn} > 0 \) for
any \( \bs{s} \in S(m) \setminus \{ \bsa \} \).
According to the proof of {\bf{Lemma \ref{lemm:Fred}}},
we have \( \{ \bs{0} , \bsa \} \subset \Lambda (m) \).
Therefore, \( \bs{c} \in V \),
the centre of the sphere in the assumption {\bf{(F1)}},
satisfies the condition \( \inrm{\bs{0} - \bs{c}}
= \inrm{\bsa - \bs{c}} \).
Hence we have \( 2 \iprd{\bsa}{\bs{c}}
= \iprd{\bsa}{\bsa} \).

We show the assertion by contradiction.
Among the vectors \( \bsb \in S(m) \) satisfying \(
\frac{2 \iprd{\bsa}{\bsb}}{\inrm{\bsa}^2} \not\in \ZZ \),
we take one with the minimal value of \( \iprd{\bsb}{\bsn} \).
Obviously \( \bsb \neq \bsa \),
hence \( \iprd{\bsb}{\bsn} >0 \).
Here, we may assume \( \bsb -k \bsa \not\in S(m) \) for any \( k>0 \),
because \( S(m) \) is finite.
Let \( L:= \bsb + \RR \bsa \),
which is a line and a subset of \( \{ \; \bs{l} \in V \; | \;
\iprd{\bs{l}}{\bsn} = \iprd{\bsb}{\bsn} \; \} \).
To show the contradiction,
we investigate the set \( L \cap \Lambda (m) \) precisely.
Now we put
\begin{align*}
 S_1 & :=
 \{ \; \bs{s} \in S(m) \; | \;
 0< \iprd{\bs{s}}{\bsn} < \iprd{\bsb}{\bsn} \; \}
 ,
 \\
 S_2 & :=
 \{ \; \bs{s} \in S(m) \; | \;
 \iprd{\bs{s}}{\bsn} = \iprd{\bsb}{\bsn} \; \}
 = S(m) \cap L
\end{align*}
and
\[
 \mathcal{T}
 :=
 \left\{ \; T \subset S_1 \; \middle| \;
 \# T \geqq 2, \; \sigma (T):= \sum_{\bs{s} \in T} \bs{s} \in L \; \right\}
 .
\]
Then we have
\begin{equation}
\label{eq:Fcls}
 \underbrace{
 \left( 1- e^{\bsa} \right)
 \left(
 \sum_{T \in \mathcal{T}}
 (-1)^{\# T} e^{\sigma (T)}
 \right)
 }_{\textrm{(1.1)}}
 -
 \underbrace{
 \left( 1- e^{\bsa} \right)
 \left( \sum_{\bs{s} \in S_2 } e^{\bs{s}} \right)
 }_{\textrm{(1.2)}}
 =
 \sum_{\bslm \in L \cap \Lambda (m)} c_{\bslm} e^{\bslm}
 .
\end{equation}
Because \( \frac{2 \iprd{\bsa}{\bs{s}}}{\inrm{\bsa}^2} \in \ZZ \) for
any \( \bs{s} \in S_1 \),
the term \( e^{\bsb} \) cannot appear in (1.1),
while it appears in (1.2).
Therefore we obtain \( \bsb \in L \cap \Lambda (m) \) and hence \(
L \cap \Lambda (m) \) has at least two points.
On the other hand, since \( L \) is a line and
since \( \Lambda (m) \) is on a sphere, \( L \cap \Lambda (m) \) has
at most two points.
Hence \( L \cap \Lambda (m) \) has
exactly two points \( \bsb \) and \( \bsb +k \bsa \),
where \( k>0 \).
Because this \( \bsb +k \bsa \) also appears in (1.2),
we have \( k \in \ZZ \).
Since both \( \bsb \) and \( \bsb +k \bsa \) are on \( \Lambda (m) \),
we have \( \inrm{\bsb - \bs{c}} = \inrm{\bsb +k \bsa - \bs{c}} \),
therefore we have \( 2 \iprd{\bsa}{ \bsb - \bs{c}} =-k \iprd{\bsa}{\bsa} \).
Then, by \( 2 \iprd{\bsa}{\bs{c}} = \iprd{\bsa}{\bsa} \),
we have
\[
 \frac{2 \iprd{\bsa}{\bsb}}{\inrm{\bsa}^2} =-k+1 \in \ZZ
 ,
\]
which is a contradiction.
\end{proof}
Finally, we show {\bf{(FR2)}}.
\begin{lemm}
\label{lemm:Fcls}
Assume {\bf{(F1)}} and let \( \bsa \in S(m) \).
Then we have \( w_{\bsa} ( \bsb ) \in S(m) \sqcup (-S(m)) \) for
any \( \bsb \in S(m) \).
\end{lemm}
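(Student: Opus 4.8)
The plan is to collapse everything onto the single line $L := \bsb + \RR\bsa$ (which contains both $\bsb$ and $w_{\bsa}(\bsb)=\bsb-\tfrac{2\iprd{\bsa}{\bsb}}{\inrm{\bsa}^2}\bsa$) and then to read off closure from the sphere. The case $\iprd{\bsa}{\bsb}=0$ is trivial since then $w_{\bsa}(\bsb)=\bsb\in S(m)$, so assume otherwise. By {\bf{Lemma \ref{lemm:Ftkn}}} and a genericity argument inside $\bsa^{\perp}$ (avoid the finitely many proper hyperplanes $(\bs{s}\pm\bs{s}')^{\perp}\cap\bsa^{\perp}$ with $\bs{s}\pm\bs{s}'\notin\RR\bsa$, so that the choice survives the sign flips below) I would pick a nonzero $\bsn\perp\bsa$ whose levels $\iprd{\cdot}{\bsn}$ separate the $\RR\bsa$-cosets of $S(m)$: equal levels occur only for vectors differing by a multiple of $\bsa$. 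After {\bf{Lemma \ref{lemm:Fr2s}}}, which alters neither $R:=S(m)\sqcup(-S(m))$ nor the truth of the assertion, I may assume $\iprd{\bs{s}}{\bsn}>0$ for every $\bs{s}\in S(m)\setminus\{\bsa\}$. Separation then forces all vectors of $S(m)$ at a common level to lie on one line parallel to $\bsa$, so $S(m)\cap L$ is exactly the set of elements at level $b:=\iprd{\bsb}{\bsn}$; since $-S(m)$ lives at nonpositive levels, it suffices to prove $w_{\bsa}(\bsb)\in S(m)\cap L$.

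Next I would extract the part of $F(m)$ supported on $L$. Writing $F(m)=(1-e^{\bsa})\,G$ and using that $L$ is stable under translation by $\bsa$, the projection $\pi_{L}$ onto terms with exponent in $L$ gives $g:=\pi_{L}(F(m))=(1-e^{\bsa})\,\pi_{L}(G)$, so $g$ is divisible by $(1-e^{\bsa})$; in particular the coefficients of $g$ sum to zero. On the other hand {\bf{(F1)}} says $\Lambda(m)$ lies on a sphere, so $\mathrm{supp}(g)=\Lambda(m)\cap L$ has at most two points. A sum of at most two monomials whose coefficients add to zero is either $0$ or of the form $c(e^{\bs{p}_{1}}-e^{\bs{p}_{2}})$. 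Moreover the affine reflection $s(\bs{v}):=w_{\bsa}(\bs{v})+\bsa$ fixes the sphere, because its centre $\bs{c}$ satisfies $2\iprd{\bsa}{\bs{c}}=\iprd{\bsa}{\bsa}$ as already recorded in the proof of {\bf{Lemma \ref{lemm:Fint}}} (here $\{\bs{0},\bsa\}\subset\Lambda(m)$ for the present $\bsn$), and it preserves $L$, acting on $L$ as the reflection in the foot of the perpendicular from $\bs{c}$; hence it interchanges the two points where the sphere meets $L$. Therefore $s(g)=-g$ in every case, equivalently $w_{\bsa}(g)=-e^{-\bsa}g$.

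I would then convert this symmetry into closure by induction on the level $b$. Assuming $w_{\bsa}(\bs{s})\in R$ for all $\bs{s}\in S(m)$ with $0<\iprd{\bs{s}}{\bsn}<b$, the reflection $w_{\bsa}$ (which preserves levels since $\bsn\perp\bsa$) permutes $S_{1}:=\{\bs{s}\in S(m):0<\iprd{\bs{s}}{\bsn}<b\}$, whence it fixes $\Phi:=\prod_{\bs{s}\in S_{1}}(1-e^{\bs{s}})$. Only factors of level $\le b$ contribute to $\pi_{L}(F(m))$, so with $\Psi:=\prod_{\bs{s}\in S(m)\cap L}(1-e^{\bs{s}})$ one has $g=\pi_{L}\bigl((1-e^{\bsa})\Phi\Psi\bigr)$. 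Applying $w_{\bsa}$, using $w_{\bsa}\Phi=\Phi$, $w_{\bsa}(1-e^{\bsa})=1-e^{-\bsa}=-e^{-\bsa}(1-e^{\bsa})$ and $w_{\bsa}(g)=-e^{-\bsa}g$, the two expressions for $w_{\bsa}(g)$ combine to
\[
 \pi_{L}\Bigl((1-e^{-\bsa})\,\Phi\,\bigl(w_{\bsa}\Psi-\Psi\bigr)\Bigr)=0 .
\]
Comparing the lowest-level ($=b$) homogeneous parts on $L$, in which $\Phi$ contributes only its constant term $1$, this reduces to $(1-e^{-\bsa})\sum_{\bs{s}\in S(m)\cap L}\bigl(e^{\bs{s}}-e^{w_{\bsa}(\bs{s})}\bigr)=0$; cancelling the non-zero-divisor $(1-e^{-\bsa})$ in $\ZZ[V]$ gives $\sum_{\bs{s}\in S(m)\cap L}e^{\bs{s}}=\sum_{\bs{s}\in S(m)\cap L}e^{w_{\bsa}(\bs{s})}$. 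Since $m\equiv1$ on $S(m)$ by {\bf{Lemma \ref{lemm:Fred}}}, both sides are multiplicity-free, so $w_{\bsa}(S(m)\cap L)=S(m)\cap L$; in particular $w_{\bsa}(\bsb)\in S(m)\subset R$, completing the induction.

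The step I expect to be the main obstacle is the matching of the two reflections. The sphere is symmetric under the affine reflection $s$, which differs from the linear reflection $w_{\bsa}$ by a translation by $\bsa$, and it is precisely this one-$\bsa$ discrepancy that the factor $(1-e^{\bsa})$ must absorb, through $-e^{-\bsa}(1-e^{\bsa})=1-e^{-\bsa}$, so that the sphere symmetry $s(g)=-g$ becomes the clean relation $w_{\bsa}(g)=-e^{-\bsa}g$ and finally forces $w_{\bsa}$-invariance of $S(m)\cap L$. Getting the bookkeeping of levels and cosets right, so that $S(m)\cap L$ is genuinely a single level set and the lowest-level comparison isolates exactly $\sum e^{\bs{s}}-\sum e^{w_{\bsa}(\bs{s})}$, is the other delicate point, and is what the genericity of $\bsn$ and the induction on $b$ are designed to secure.
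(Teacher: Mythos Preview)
Your proof is correct and follows essentially the same route as the paper's: reduce via {\bf Lemma~\ref{lemm:Fr2s}} to the situation $\iprd{\bsa}{\bsn}=0$ and $\iprd{\bs{s}}{\bsn}>0$ otherwise, restrict attention to the line $L=\bsb+\RR\bsa$, use that $\Lambda(m)\cap L$ has at most two points and that these are exchanged by the affine reflection $s=w_{\bsa}+\bsa$ (equivalently, the paper's computation $\bslm_2=w_{\bsa}(\bslm_1)+\bsa$ together with $c_{\bslm_2}=-c_{\bslm_1}$), and combine this with the inductive $w_{\bsa}$-invariance of the lower-level part to force $w_{\bsa}(S(m)\cap L)=S(m)\cap L$.

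The only difference is packaging: you encode the two-point symmetry as the single relation $w_{\bsa}(g)=-e^{-\bsa}g$ for $g=\pi_L(F(m))$ and then cancel, whereas the paper manipulates formula~(\ref{eq:Fcls}) directly to isolate $\sum_{\bs{s}\in S_2}e^{\bs{s}}$ as a $w_{\bsa}$-invariant expression. Your extra genericity on $\bsn$ (separating $\RR\bsa$-cosets) is harmless but not actually needed for the argument, since the projection $\pi_L$ already discards any level-$b$ elements off $L$; the paper's asserted equality $S_2=S(m)\cap L$ is likewise inessential to its argument.
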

\begin{proof}
As in the previous proof,
we may assume \( \iprd{\bsa}{\bsn} =0 \) and \(
\iprd{\bs{s}}{\bsn} > 0 \) for
any \( \bs{s} \in S(m) \setminus \{ \bsa \} \).
Here we show that the reflection \( w_{\bsa} \) is a bijection
from \( S(m) \setminus \{ \bsa \} \) to itself by contradiction.
Among the vectors \( \bsb \in S(m) \setminus \{ \bsa \} \) satisfying \(
w_{\bsa} ( \bsb ) \not\in S(m) \setminus \{ \bsa \} \),
we take one with the minimal value of \( \iprd{\bsb}{\bsn} \).
Let \( L:= \bsb + \RR \bsa \),
which is a line and a subset of \( \{ \; \bs{l} \in V \; | \;
\iprd{\bs{l}}{\bsn} = \iprd{\bsb}{\bsn} \; \} \).
To show the contradiction,
recall the formula (\ref{eq:Fcls}) and
see its right hand side precisely.
Obviously, \( L \cap \Lambda (m) \) has at most two points.
If \( L \cap \Lambda (m) \) has
exactly two points \( \bslm_1 \) and \( \bslm_2 = \bslm_1 +k \bsa \),
we have
\[
 \frac{2 \iprd{\bsa}{\bslm_1}}{\inrm{\bsa}^2} =-k+1
\]
as in the previous proof, and hence we have \(
\bslm_2 = w_{\bsa} ( \bslm_1 ) + \bsa \).
Therefore, in our current setup,
the formula (\ref{eq:Fcls}) can be rewritten as
\[
 \left( 1- e^{\bsa} \right)
 \left(
 \sum_{T \in \mathcal{T}}
 (-1)^{\# T} e^{\sigma (T)}
 \right)
 -
 \left( 1- e^{\bsa} \right)
 \left( \sum_{\bs{s} \in S_2 } e^{\bs{s}} \right)
 =
 c_{\bslm_1} e^{\bslm_1} + c_{\bslm_2} e^{w_{\bsa}(\bslm_1)+\bsa}
 ,
\]
where \( c_{\bslm_1} \) and \( c_{\bslm_2} \) may vanish
if \( L \cap \Lambda (m) \) has less than two points.
Since the right hand side of this equation is a multiple
of \( \left( 1- e^{\bsa} \right) \),
we have \( c_{\bslm_2} =- c_{\bslm_1} \).
Consequently, the equation (\ref{eq:Fcls}) turns to be
\[
 \left( 1- e^{\bsa} \right)
 \left(
 \sum_{T \in \mathcal{T}}
 (-1)^{\# T} e^{\sigma (T)}
 \right)
 -
 \left( 1- e^{\bsa} \right)
 \left( \sum_{\bs{s} \in S_2 } e^{\bs{s}} \right)
 =
 c_{\bslm_1} \left( e^{\bslm_1} - e^{w_{\bsa}(\bslm_1)+\bsa} \right)
 .
\]
Namely, we have
\[
 \sum_{\bs{s} \in S_2 } e^{\bs{s}}
 =
 \left( \sum_{T \in \mathcal{T}}
 (-1)^{\# T} e^{\sigma (T)} \right)
 - c_{\bslm_1} \left(
 \frac{e^{\bslm_1} - e^{w_{\bsa}(\bslm_1)+\bsa}}{1- e^{\bsa}} \right)
 .
\]
Here we remark that \( S_1 \) is
invariant under the action of \( w_{\bsa} \) by
the choice of \( \bsb \) and that \( L \) is
invariant under the action of \( w_{\bsa} \) because \(
w_{\bsa} ( \bs{v} ) - \bs{v} \in \RR \bsa \).
Hence, in the above equation,
the expression in the first brackets on the right hand side
is also invariant under the action of \( w_{\bsa} \).
We can see that the expression in the second brackets on the right hand side
is invariant under the action of \( w_{\bsa} \) easily.
Hence \( S_2 \) is
invariant under the action of \( w_{\bsa} \),
which is a contradiction.
\end{proof}
The condition {\bf{(FR1)}} is contained in the assumption {\bf{(F)}} and
now we completes the proof of {\bf{(F1)}}\( \Rightarrow \){\bf{(F2)}}.

\bigskip

\begin{rema}
\label{rema:Finf}
We remark that
{\bf{Theorem \ref{theo:Faim}}} is not correct
if \( S(m) \) could be infinite.
To give a counter example, first we consider
the following equation on \( \RR [[X]] \) (
cf.~\cite{St1}*{Chapter 1, Exercise 40}):
\[
 \prod_{k \in \NN} \left( 1- X^k \right)^{a_k} = 1-2X
 .
\]
According to the definition of formal power series,
one can see inductively that \( a_k \in \ZZ \) for each \( k \in \NN \).
On the other hand,
by taking the logarithm of the both sides of above equation,
we have
\[
 a_k = \frac{1}{k} \sum_{d|k} \mu \left( \frac{k}{d} \right) 2^d
 ,
\]
where \( \mu \) is the M{\"o}bius function.
Thus, we have
\[
 a_k \geqq \frac{1}{k} \left( 2^k - \sum_{d=1}^{k-1} 2^d \right)
 = \frac{2}{k} >0
\]
and hence \( a_k \in \NN \) for each \( k \in \NN \).
Consequently, the following \( m \) is a counterexample
when \( N=1 \).
\[
 m (k):=
 \left\{ \begin{array}{lc} a_k & ( k \in \NN ) \\
 0 & (\text{otherwise}) \end{array} \right.
 .
\]

\end{rema}
\begin{rema}
We remark that
{\bf{Theorem \ref{theo:Faim}}} is not correct
if \( m \) could take negative integers.
Consider the case \( N=2 \), let \( \bsa := (1, \sqrt{3}), \;
\bsb :=(1, - \sqrt{3} ) \) and
\[
 m ( \bs{v} ):=
 \left\{ \begin{array}{lc} 1 & ( \bs{v} = 2 \bsa , 2 \bsb , \bsa + \bsb ) \\
 -1 & ( \bs{v} = \bsa , \bsb ) \\ 0 & (\text{otherwise}) \end{array} \right.
 .
\]
Obviously, it is not from a root system.
However, we have
\begin{align*}
 \frac{\left( 1- e^{2 \bsa} \right) \left( 1- e^{2 \bsb} \right)
 \left( 1- e^{\bsa + \bsb} \right)}{\left( 1- e^{\bsa} \right)
 \left( 1- e^{\bsb} \right)}
 & =
 \left( 1+ e^{\bsa} \right) \left( 1+ e^{\bsb} \right)
 \left( 1- e^{\bsa + \bsb} \right)
 \\
 & \hspace*{-24pt} =
 1+ e^{\bsa} + e^{\bsb} - e^{2 \bsa + \bsb} - e^{\bsa +2 \bsb}
 - e^{2 \bsa +2 \bsb}
\end{align*}
and therefore \( S(m) \) is on the sphere of
its centre \( (2,0) \) and its radius \( 2 \).
\end{rema}
%

\section{The case corresponding to affine root systems}

In this section, we give a characterization of
the sets of positive roots of affine root systems.

\subsection{Affine root systems}

Let \( N \in \NN \) and
let \( V \) be an \( N \)-dimensional real Euclidean space.
Let \( \wV := \RR \oplus V \) be
an \( (N+1) \)-dimensional real linear space.
We denote by \( p_1 : \wV \to \RR \) and \( p_2 : \wV \to V \) be
projections to the first and second components.
Namely, for \( \wbs{v} =(n, \bs{v} ) \in \wV \), we define \(
p_1 ( \wbs{v} ):=n \) and \(p_2 ( \wbs{v} ):= \bs{v} \).
We denote by \( \iprd{\cdot}{\cdot} \) the standard
Euclidean inner product of \( \wV \) and
by \( \inrm{\cdot} \) the standard
Euclidean norm of \( \wV \). 
We denote by \( \iiprd{\cdot}{\cdot} \) the standard
Euclidean inner product of \( V \) and
by \( \iinrm{\cdot} \) the standard
Euclidean norm of \( V \). 
Also, we use the symbols \( \iiprd{\cdot}{\cdot} \) and \(
\iinrm{\cdot} \) as the affine inner product
and affine norm of \( \wV \),
namely, for \( \wbs{v}_j =( n_j , \bs{v}_j )
\in \wV \; (j=1,2) \), we define \(
\iprd{\wbs{v}_1}{\wbs{v}_2} :=
n_1 n_2 + \iiprd{\bs{v}_1}{\bs{v}_2} \) and \(
\iiprd{\wbs{v}_1}{\wbs{v}_2} :=
\iiprd{\bs{v}_1}{\bs{v}_2} \).
We denote by \( \wV_0 := \{ \; \wbs{v} \in \wV \; | \;
\iinrm{\wbs{v}} =0 \; \} = \RR \oplus \{ \bs{0}_V \} \) the
isotropic subset of \( \wV \) and
by \( \wV_+  := \{ \; \wbs{v} \in \wV \; | \;
\iinrm{\wbs{v}} >0 \; \} = \wV \setminus \wV_0 \) its
complement.

Let \( \AVR \) be the set of all affine linear functions
from \( V \) to \( \RR \) and
let \( \AVV \) be the group of all affine linear transformations
of \( V \) with its multiplication defined by composition.
For any \( \wbs{v} \in \wV \),
the function \( f_{\wbs{v}} \) defined by \( f_{\wbs{v}} ( \bs{x} ):=
\iiprd{p_2 ( \wbs{v} )}{\bs{x}} + p_1 ( \wbs{v} ) \) belongs to \( \AVR \).
Since the map \( \iota : \wV \ni \wbs{v}
\stackrel{\sim}{\mapsto} f_{\wbs{v}} \in \AVR \) is
an isomorphism of \( \RR \)-vector spaces,
we identify \( \AVR \) with \( \wV \) in this section.
Let \( D:= p_2 \circ \iota^{-1} : \AVR \to V \).
We use the symbols \( \iiprd{\cdot}{\cdot} \) and \(
\iinrm{\cdot} \) also on \( \AVR \),
namely, we define \( \iiprd{f_1}{f_2} :=\iiprd{D f_1}{D f_2}
\) for \( f_1 , f_2 \in \AVR \) and \(
\iinrm{f} := \sqrt{\iiprd{f}{f}} \) for \( f \in \AVR \).
Let \( \AVR_+ := \iota ( \wV_+ )= \{ \; f \in \AVR \; | \;
\iinrm{f} >0 \; \} \) and
we denote by \( H_f := \{ \; \bs{x} \in V \; | \; f( \bs{x} )=0
\; \} \) be the set of the zeros of \( f \in \AVR_+ \),
which is a hyperplane in \( V \).
We denote by \( w_f : V \to V \) the reflection on \( V \) in
the hyperplane \( H_f \), which is defined by
\[
 w_f ( \bs{x} )
 := \bs{x} - \frac{2f( \bs{x} )}{\iinrm{f}^2} Df
 \qquad
 ( \bs{x} \in V)
 .
\]
Also we use the same symbol \( w_f \) as
the reflection on \( \AVR \), which is defined by
\[
 w_f (g) := g \circ w_f^{-1} = g \circ w_f
 = g - \frac{2 \iiprd{f}{g}}{\iinrm{f}^2} f
 \qquad
 (g \in \AVR)
 .
\]
For \( \wbsa \in \wV_+ \),
the reflection \( w_{\wbsa} : V \to V \) and \(
w_{\wbsa} : \wV \to \wV \) is
defined from \( w_{f_\wbsa} \) by the identification
of \( \wV \) with \( \AVR \), explicitly,
\begin{equation}
\label{eq:Arfv}
 w_{\wbsa} ( \bs{v} )
 := \bs{v} - \frac{2 f_{\wbsa} ( \bs{v} )}{\iinrm{\wbsa}^2} p_2 ( \wbsa )
 \qquad
 ( \bs{v} \in V)
\end{equation}
and
\begin{equation}
\label{eq:Arfa}
 w_{\wbsa} ( \wbs{v} )
 := \wbs{v} -
 \frac{2 \iiprd{\wbsa}{\wbs{v}}}{\iinrm{\wbsa}^2}
 \wbsa
 \qquad
 ( \wbs{v} \in \wV ).
\end{equation}
In the sense of (\ref{eq:Arfv}),
we have \( w_{\wbsa} \in \AVV \) and
this \( w_{\wbsa} \) is the reflection on \( V \) in
the hyperplane \( H_{\wbsa} := H_{f_{\wbsa}} \).
For \( \wR \subset \wV_+ \),
we denote by \( W( \wR ) \) the subgroup of \( \AVV \) generated by
reflections \( \{ \; w_{\wbsa} \; | \; \wbsa \in \wR \; \} \).
In the sense of (\ref{eq:Arfa}),
we have \( w_{\wbsa} \in \GL ( \wV ) \).
This induces a group homomorphism \(  W( \wR ) \to \GL ( \wV ) \)
and therefore we can define \( \det : W( \wR ) \to \RR \) from
\( \det : \GL ( \wV ) \to \RR \).
Since \( w_{\wbsa} ( \wbsa )=- \wbsa \) and \(
w_{\wbsa} ( \wbsb )= \wbsb \) if \(
\iiprd{\wbsa}{\wbsb} =0 \), we have \( \det w_{\wbsa} =-1 \).
We remark that
\begin{equation}
\label{eq:Afwi}
 f_{w( \wbsa )} ( w( \bs{x} )) = f_{\wbsa} ( \bs{x} )
 \qquad
 ( \; \wbsa \in \wV_+ , \; \bs{x} \in V , \; w \in W( \wR ) \;)
\end{equation}
holds, because
\[
 f_{w_{\wbsb} ( \wbsa )} ( w_{\wbsb} ( \bs{x} )) = f_{\wbsa} ( \bs{x} )
 \qquad
 ( \; \wbsa, \wbsb \in \wV_+ , \; \bs{x} \in V \; )
\]
holds.
\begin{defi}
We say \( \wR \subset \wV_+ \) is
an affine root system of rank \( N \) if
\( \wR \) satisfies the following four conditions:
\begin{description}
\item[(AR1)]
\( \wR \) spans \( \wV \) as an \( \RR \)-vector space.
\item[(AR2)]
For any \( \wbsa \in \wR \), \( w_{\wbsa} ( \wR )= \wR \).
(In the sense of (\ref{eq:Arfa}).)
\item[(AR3)]
For any \( \wbsa , \wbsb \in \wR \),
\( \dfrac{2 \iiprd{\wbsa}{\wbsb}}{\iinrm{\wbsa}^2} \in \ZZ \).
\item[(AR4)]
\( W( \wR ) \) acts properly on \( V \).
Namely, for any compact subsets \( K_1 , K_2 \subset V \),
the set \( \{ \; w \in W( \wR ) \; | \; w ( K_1 ) \cap K_2 \neq
\emptyset \; \} \) is finite.
(In the sense of (\ref{eq:Arfv}).)
\end{description}
\end{defi}
\begin{rema}
In the original definition given by Macdonald~\cite{Ma1},
affine root systems are subsets of \( \AVR \).
But in this section we give the definition of
affine root systems as subsets of \( \wV \),
by the identification of \( \AVR \) with \( \wV \).
This formulation is in the style of the theory
of extended affine root systems,
which was given by Saito~\cite{Sa1} about 10 years
after Macdonald~\cite{Ma1},
\end{rema}
\begin{rema}
\label{rema:A2Fp}
If \( \wR \) is an affine root system of rank \( N \),
then \( p_2 ( \wR ) \) is a finite root system of rank \( N \).
Hence \( p_2 ( \wR ) \) is a finite set
by {\bf{Proposition \ref{prop:Ffnt}}} and
thus \( \min ( \wR ) := \min \{ \; \iinrm{\wbsa} \; | \;
\wbsa \in \wR \; \} >0 \) exists.
\end{rema}
The condition {\bf{(AR4)}} is difficult to adapt to our purpose as it is,
so we rewrite it in a different form.
\begin{lemm}
\label{lemm:Afnc}
If \( \wR \) is an affine root system,
then the following property holds.
\begin{description}
\item[(AR4')]
The set \( \{ \; \wbsa \in \wR \; | \; | p_1 ( \wbsa ) | \leqq C \; \}
\) is finite for any \( C \in \RRP \).
\end{description}
Hence the set \( \wR \) is at most countable.
\end{lemm}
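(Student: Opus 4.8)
The plan is to deduce \textbf{(AR4')} from the proper action \textbf{(AR4)} together with the integrality condition \textbf{(AR3)}, using the geometry of the reflecting hyperplanes \( H_{\wbsa} \). The guiding observation is that a bound on \( | p_1 ( \wbsa ) | \) forces \( H_{\wbsa} \) to pass near the origin, that a reflecting hyperplane near the origin produces a reflection moving a fixed ball into itself, and that properness limits how many such reflections can occur. Two roots may give the same reflection on \( V \), so I will separately control how many roots lie on each line through the origin.

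First I would record an unconditional bound along lines. For \( \wbsa \in \wR \), any two roots \( \wbs{c} , \wbs{d} \in \RR \wbsa \cap \wR \) satisfy \( \wbs{d} = \mu \wbs{c} \) for some \( \mu \neq 0 \). Since \( \iiprd{\cdot}{\cdot} \) depends only on \( p_2 \), applying \textbf{(AR3)} in both orders gives \( 2 \mu \in \ZZ \) and \( 2 / \mu \in \ZZ \), whence \( \mu \in \{ \pm \tfrac{1}{2} , \pm 1 , \pm 2 \} \). Thus \( \RR \wbsa \cap \wR \) has at most six elements; in particular every line through the origin meets \( \wR \) in a finite set.

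The main step relates the bound on \( p_1 \) to properness. Fix \( C \in \RRP \) and set \( \rho := C / \min ( \wR ) \), which is finite and positive by Remark~\ref{rema:A2Fp}; let \( K := \{ \; \bs{v} \in V \; | \; \iinrm{\bs{v}} \leq \rho \; \} \), a compact subset of \( V \). For \( \wbsa \in \wR \) with \( | p_1 ( \wbsa ) | \leq C \), the defining formula \( f_{\wbsa} ( \bs{x} ) = \iiprd{p_2 ( \wbsa )}{\bs{x}} + p_1 ( \wbsa ) \) shows that the distance from the origin to \( H_{\wbsa} \) equals \( | p_1 ( \wbsa ) | / \iinrm{\wbsa} \leq C / \min ( \wR ) = \rho \), using \( \iinrm{\wbsa} = \iinrm{p_2 ( \wbsa )} \geq \min ( \wR ) \). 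Hence the foot of the perpendicular from the origin to \( H_{\wbsa} \) lies in \( K \), and being on \( H_{\wbsa} \) it is fixed by \( w_{\wbsa} \); therefore \( w_{\wbsa} ( K ) \cap K \neq \emptyset \). Each such \( w_{\wbsa} \) is a generator of \( W( \wR ) \), so by \textbf{(AR4)} they all lie in the finite set \( \{ \; w \in W( \wR ) \; | \; w(K) \cap K \neq \emptyset \; \} \); thus only finitely many distinct reflections \( w_{\wbsa} \) arise from roots with \( | p_1 ( \wbsa ) | \leq C \).

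Finally I would assemble the two steps. A reflection \( w_{\wbsa} \) on \( V \) has fixed-point set exactly \( H_{\wbsa} \), and \( H_{\wbsa} \) determines \( f_{\wbsa} \) up to a nonzero scalar, hence determines the line \( \RR \wbsa \). So the finitely many reflections from the previous step correspond to finitely many lines through the origin, and every \( \wbsa \in \wR \) with \( | p_1 ( \wbsa ) | \leq C \) lies on one of them; since each line carries at most six roots, the set in \textbf{(AR4')} is finite. Countability then follows by writing \( \wR = \bigcup_{C \in \NN} \{ \; \wbsa \in \wR \; | \; | p_1 ( \wbsa ) | \leq C \; \} \), a countable union of finite sets. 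I expect the middle step — the geometric translation of ``\( p_1 \) bounded'' into ``\( w_{\wbsa} ( K ) \cap K \neq \emptyset \)'', so that \textbf{(AR4)} applies — to be the crux, while the line bound and the final assembly are routine.
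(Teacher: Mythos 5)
Your proposal is correct and follows essentially the same route as the paper: both translate the bound \( | p_1 ( \wbsa ) | \leqq C \) into the statement that \( H_{\wbsa} \) meets the closed ball of radius \( C / \min ( \wR ) \) (which exists by Remark~\ref{rema:A2Fp}), hence \( w_{\wbsa} \) maps that ball to a set intersecting it, and then invoke \textbf{(AR4)} with this ball as both compact sets. Your extra preliminary step, bounding \( \# ( \RR \wbsa \cap \wR ) \) by \( 6 \) via \textbf{(AR3)} to handle the non-injectivity of \( \wbsa \mapsto w_{\wbsa} \), addresses a point the paper's proof passes over silently (it concludes directly that the roots, not just the reflections, form a finite set), so your write-up is if anything slightly more complete.
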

\begin{proof}
We denote by \( B_r := \{ \; \bs{x} \in V \; | \;
\iinrm{\bs{x}} \leqq r \; \} \) the solid closed ball
in \( V \) with its centre \( \bs{0}_V \) and its radius \( r>0 \).
By definition, \( w_{\wbsa} ( B_r ) \cap B_r \neq \emptyset \) if
and only if \( H_{\wbsa} \cap B_r \neq \emptyset \),
explicitly, \( \frac{| p_1 ( \wbsa )|}{\iinrm{\wbsa}}
\leqq r \).
The assumption {\bf{(AR4)}} implies that such \( \wbsa \in \wR \) is
finite for any fixed \( r>0 \).
By taking \( r= \frac{C}{\min ( \wR )} \),
we know that the condition {\bf{(AR4')}} holds.
\end{proof}
\begin{coro}
Let \( \wR \) be an affine root system.
For any \( \wbsn \in \wV \) satisfying \( p_1 ( \wbsn ) \neq 0 \),
the set \( \{ \; \wbs{s} \in \wR \; | \; | \iprd{\wbs{s}}{\wbsn} |
\leqq C \; \} \) is finite.
\end{coro}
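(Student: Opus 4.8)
The plan is to convert the bound on the full Euclidean inner product \( \iprd{\wbs{s}}{\wbsn} \) into a bound on the first coordinate \( p_1 ( \wbs{s} ) \), and then to invoke the condition {\bf{(AR4')}} from {\bf{Lemma \ref{lemm:Afnc}}}. The essential observation is that, by the definition of the inner product on \( \wV \),
\[
 \iprd{\wbs{s}}{\wbsn}
 = p_1 ( \wbs{s} ) \, p_1 ( \wbsn )
 + \iiprd{p_2 ( \wbs{s} )}{p_2 ( \wbsn )} ,
\]
so that the hypothesis \( p_1 ( \wbsn ) \neq 0 \) is precisely what permits one to solve for \( p_1 ( \wbs{s} ) \) once the second summand has been controlled.

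First I would control that second summand. By {\bf{Remark \ref{rema:A2Fp}}}, \( p_2 ( \wR ) \) is a finite root system, hence a finite subset of \( V \). Therefore \( \iiprd{p_2 ( \wbs{s} )}{p_2 ( \wbsn )} \) takes only finitely many values as \( \wbs{s} \) ranges over \( \wR \), and one may set \( M := \max_{\bsb \in p_2 ( \wR )} | \iiprd{\bsb}{p_2 ( \wbsn )} | \geqq 0 \), a constant that bounds it uniformly.

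Next I would combine the estimates. If \( \wbs{s} \in \wR \) satisfies \( | \iprd{\wbs{s}}{\wbsn} | \leqq C \), then the displayed decomposition together with the triangle inequality yields
\[
 | p_1 ( \wbs{s} ) | \, | p_1 ( \wbsn ) |
 = | \iprd{\wbs{s}}{\wbsn} - \iiprd{p_2 ( \wbs{s} )}{p_2 ( \wbsn )} |
 \leqq C + M ,
\]
and dividing through by \( | p_1 ( \wbsn ) | > 0 \) gives \( | p_1 ( \wbs{s} ) | \leqq (C+M) / | p_1 ( \wbsn ) | \). Hence the set in question is contained in \( \{ \; \wbsa \in \wR \; | \; | p_1 ( \wbsa ) | \leqq (C+M)/| p_1 ( \wbsn ) | \; \} \), which is finite by {\bf{(AR4')}}; a subset of a finite set is finite, proving the claim.

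I do not expect a genuine obstacle here, since the argument is essentially a two-term estimate feeding into {\bf{Lemma \ref{lemm:Afnc}}}. The only points demanding care are keeping the two inner products \( \iprd{\cdot}{\cdot} \) and \( \iiprd{\cdot}{\cdot} \) distinct, and recognizing that the assumption \( p_1 ( \wbsn ) \neq 0 \) is indispensable: it is exactly the information needed to transfer control from the isotropic direction recorded by \( p_1 \) to the finiteness supplied by {\bf{(AR4')}}. Without it the left-hand side of the displayed estimate degenerates and no bound on \( p_1 ( \wbs{s} ) \) can be extracted.
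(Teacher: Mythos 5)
Your proof is correct and follows exactly the route the paper intends: the paper's own (one-line) proof simply cites {\bf{Remark \ref{rema:A2Fp}}} and {\bf{Lemma \ref{lemm:Afnc}}}, and your argument is precisely the natural way to combine them --- use finiteness of \( p_2(\wR) \) to bound \( \iiprd{p_2(\wbs{s})}{p_2(\wbsn)} \) uniformly, then solve for \( p_1(\wbs{s}) \) using \( p_1(\wbsn) \neq 0 \) and invoke {\bf{(AR4')}}. Nothing to add; this is the intended argument with the details written out.
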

\begin{proof}
It is easily shown
from {\bf{Remark \ref{rema:A2Fp}}} and {\bf{Lemma \ref{lemm:Afnc}}}.
\end{proof}
Now we assume that \( \wR \subset \wV_+ \) satisfies
the conditions {\bf{(AR2)}}, {\bf{(AR3)}} and {\bf{(AR4')}}.
Let
\begin{align*}
 R_1 & :=
 \{ \; \bsa \in p_2 ( \wR ) \; | \;
 \# ( p_2^{-1} ( \bsa ) \cap \wR )=1 \; \}
 ,
 &
 \wR_1 & := p_2^{-1} ( R_1 ) \cap \wR
 ,
 \\
 R_{\infty} & :=
 \{ \; \bsa \in p_2 ( \wR ) \; | \;
 \# ( p_2^{-1} ( \bsa ) \cap \wR )>1 \; \}
 &
 \text{and} \quad
 \wR_{\infty} & := p_2^{-1} ( R_{\infty} ) \cap \wR
 .
\end{align*}
Then we have \( p_2 ( \wR )= R_1 \sqcup R_{\infty} \) and
hence \( R_1 \) and \( R_{\infty} \) are finite sets
by {\bf{Remark \ref{rema:A2Fp}}}.
Because
\[
 p_2 ( w_{\wbsa} ( \wbsb ))= w_{ p_2 ( \wbsa) } ( p_2 ( \wbsb ))
 \qquad ( \; \wbsa \in \wV_+ , \; \wbsb \in \wV \;)
 ,
\]
where the right hand side is the reflection on \( V \),
and because \( w_{\wbsa} : \wR \to \wR \) is bijective,
we know that \( W( \wR ) \) acts on \( \wR_1 \) and on \( \wR_{\infty} \).
It is easy to see
that {\bf{(AR2)}}, {\bf{(AR3)}} and {\bf{(AR4')}} hold
even if we change \( \wR \) to \( \wR_1 \) or \( \wR_{\infty} \).

For each \( \bsa \in R_{\infty} \) we take \( q( \bsa ) \in \wR_{\infty}
\) satisfying \( p_2 (q( \bsa ))= \bsa \).
Because
\[
 w_{ \wbsa_2 } \circ w_{ \wbsa_1 } ( \wbsa )
 =
 \wbsa +2 ( \wbsa_2 - \wbsa_1 )
 \qquad
 ( \; \wbsa , \wbsa_1 , \wbsa_2 \in p_2^{-1} ( \bsa ) \; )
\]
holds,
we know the following:
\begin{itemize}
\item
\( p_2^{-1} ( \bsa ) \cap \wR \) is an infinite set.
\item
\( u_{\bsa} := \min \{ \; | p_1 ( \wbsa_1 - \wbsa_2 )| \; | \;
\wbsa_1 , \wbsa_2 \in p_2^{-1} ( \bsa ) \cap \wR , \;
\wbsa_1 \neq \wbsa_2 \; \} >0 \) exists by {\bf{(AR4')}}.
\item
\( p_2^{-1} ( \bsa ) \cap \wR =
 \{ \; q( \bsa )+k ( u_{\bsa} , \bs{0}_V ) \; | \; k \in \ZZ \; \} \).
\end{itemize}
\begin{prop}
\label{prop:Afnc}
Under the assumption {\bf{(AR1)}}, {\bf{(AR2)}} and {\bf{(AR3)}},
the condition {\bf{(AR4)}} is equivalent to the condition {\bf{(AR4')}}.
\end{prop}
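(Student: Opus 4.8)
The plan is to treat the two implications separately, with the forward one being almost immediate and the backward one carrying all of the content.

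For \textbf{(AR4)}\(\Rightarrow\)\textbf{(AR4')}, I would simply invoke {\bf Lemma \ref{lemm:Afnc}}: assuming \textbf{(AR1)}--\textbf{(AR3)} together with \textbf{(AR4)} is exactly the assumption that \( \wR \) is an affine root system, so the lemma already yields \textbf{(AR4')}. (Its proof uses only \textbf{(AR4)} and the existence of \( \min(\wR) \), and the latter rests on {\bf Remark \ref{rema:A2Fp}}, hence on \textbf{(AR1)}--\textbf{(AR3)}.)

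For \textbf{(AR4')}\(\Rightarrow\)\textbf{(AR4)}, I would first record the affine-Weyl structure of \( W(\wR) \). Using \( p_2(w_{\wbsa}(\wbsb))=w_{p_2(\wbsa)}(p_2(\wbsb)) \) one checks, as in the discussion preceding {\bf Remark \ref{rema:A2Fp}}, that \( p_2(\wR) \) is a finite root system of rank \( N \); hence \( W_0:=W(p_2(\wR)) \) is finite by {\bf Proposition \ref{prop:Ffnt}}. Since the linear part of \( w_{\wbsa} \) acting on \( V \) via (\ref{eq:Arfv}) is exactly the reflection \( w_{p_2(\wbsa)} \), taking linear parts defines a surjective homomorphism \( \pi\colon W(\wR)\to W_0 \) whose kernel \( T \) consists of the translations in \( W(\wR)\subset\AVV \). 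Writing \( \lambda(\tau)\in V \) for the translation vector of \( \tau\in T \) and \( \Lambda:=\{\lambda(\tau)\mid\tau\in T\} \), faithfulness of the action on \( V \) makes \( \tau\mapsto\lambda(\tau) \) an isomorphism of \( T \) onto the subgroup \( \Lambda \) of \( (V,+) \). Because \( W_0 \) is finite, \( W(\wR) \) is a finite union of cosets \( w_iT \); for compact \( K_1,K_2\subset V \) and \( w=w_i\tau \) one has \( w(K_1)\cap K_2\neq\emptyset \) iff \( \lambda(\tau)\in w_i^{-1}(K_2)-K_1 \), a bounded set. Thus \textbf{(AR4)} reduces to the single assertion that \( \Lambda \) is a discrete subgroup of \( V \).

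It remains to deduce discreteness of \( \Lambda \) from \textbf{(AR4')}, and this is the crux. From (\ref{eq:Afwi}) one computes that a translation \( \tau\in T \) with vector \( \lambda \) acts on \( \wV \) by the shear \( \wbs{v}\mapsto\wbs{v}-\iiprd{\lambda}{p_2(\wbs{v})}\,(1,\bs{0}_V) \); in particular, for any fixed \( \wbsa\in\wR \), condition \textbf{(AR2)} gives \( \tau(\wbsa)=\wbsa-\iiprd{\lambda}{p_2(\wbsa)}\,(1,\bs{0}_V)\in\wR \), so \( p_1(\tau(\wbsa))=p_1(\wbsa)-\iiprd{\lambda}{p_2(\wbsa)} \). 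If \( \Lambda \) were not discrete I would pick distinct \( \lambda_n\in\Lambda \) with \( \lambda_n\to\bs{0}_V \); since \( p_2(\wR) \) is finite and spans \( V \), the finitely many linear functionals \( \iiprd{\cdot}{p_2(\wbsa)} \) separate the \( \lambda_n \), so for some fixed \( \wbsa \) the numbers \( \iiprd{\lambda_n}{p_2(\wbsa)} \) take infinitely many values, all tending to \( 0 \) and hence bounded. The corresponding elements \( \tau_n(\wbsa)\in\wR \) are then infinitely many with uniformly bounded \( |p_1| \), contradicting \textbf{(AR4')}. The main obstacle is exactly this step: the reduction to discreteness of the translation lattice is routine semidirect-product bookkeeping, but converting the ``bounded \( p_1 \)-slices are finite'' hypothesis \textbf{(AR4')} into discreteness of \( \Lambda \) needs the right pairing between \( T \) and \( \wR \), which is precisely what (\ref{eq:Afwi}) and the finiteness and spanning of \( p_2(\wR) \) provide.
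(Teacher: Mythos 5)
Your proposal is correct, but its second half follows a genuinely different route from the paper's. The easy direction coincides: both you and the paper obtain \textbf{(AR4)}\(\Rightarrow\)\textbf{(AR4')} from Lemma~\ref{lemm:Afnc}. For \textbf{(AR4')}\(\Rightarrow\)\textbf{(AR4)}, however, the paper never isolates the translation subgroup: it exhausts \( V \) by the open sets \( D_k = \bigcap_{\bsa \in R_{\infty}} D_{k,\bsa} \) built from the fibre data \( q(\bsa) \), \( u_{\bsa} \), and proves finiteness of \( W_k = \{\, w \in W(\wR) \mid w(D_k) \cap D_k \neq \emptyset \,\} \) by a rigidity count: such a \( w \) fixes \( \wV_0 \) pointwise, permutes the finite set \( \wR_1 \), and must send each \( q(\bsa) \) into the finite set \( X_{\bsa} \), so by \textbf{(AR1)} and linearity of the action on \( \wV \) only finitely many \( w \) can occur (with the case \( R_{\infty}=\emptyset \) treated separately). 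You instead exploit the structure of \( W(\wR) \) inside \( \AVV \): taking linear parts gives \( 1 \to T \to W(\wR) \to W_0 \to 1 \) with \( W_0 = W(p_2(\wR)) \) finite, properness reduces to discreteness of the group of translation vectors, and discreteness is extracted from \textbf{(AR4')} by pairing a would-be sequence \( \lambda_n \to \bs{0}_V \) with the finitely many functionals \( \iiprd{\cdot}{\bsa} \), \( \bsa \in p_2(\wR) \), and using the shear action \( \tau(\wbs{v}) = \wbs{v} - \iiprd{\lambda}{p_2(\wbs{v})}\,(1,\bs{0}_V) \) to manufacture infinitely many roots with bounded \( |p_1| \). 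Your route is more conceptual, avoids the \( R_1 \)/\( R_{\infty} \) fibre analysis and the case split; the paper's route stays entirely within objects (\( q(\bsa) \), \( u_{\bsa} \), \( X_{\bsa} \)) it has already set up for the surrounding discussion and needs no semidirect-product bookkeeping. One small point of hygiene: Remark~\ref{rema:A2Fp} and Proposition~\ref{prop:Ffnt} are stated for root systems satisfying all the axioms, so when you invoke them for \( p_2(\wR) \) you should note explicitly that \textbf{(AR1)}--\textbf{(AR3)} alone yield \textbf{(FR1)}--\textbf{(FR3)} for \( p_2(\wR) \) (via \( p_2(w_{\wbsa}(\wbsb)) = w_{p_2(\wbsa)}(p_2(\wbsb)) \)); this is true, and it is the same implicit use the paper makes in the discussion following Lemma~\ref{lemm:Afnc}.
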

\begin{proof}
We have already shown {\bf{(AR4)}}\( \Rightarrow \){\bf{(AR4')}}
in {\bf{Lemma \ref{lemm:Afnc}}}.
Here we show {\bf{(AR4')}}\( \Rightarrow \){\bf{(AR4)}}.
If \( R_{\infty} = \emptyset \), then \( W( \wR ) \) is a
finite group and
therefore {\bf{(AR4)}} holds automatically.
Hence we may assume \( R_{\infty} \neq \emptyset \).
Let \(  D_{k, \bsa } := \{ \; \bs{x} \in V \; | \;
| f_{q( \bsa )} ( \bs{x} )| < k u_{\bsa} \; \} \) for \(
k \in \NN \) and \( \bsa \in R_{\infty} \).
Then \( D_k := \bigcap_{\bsa \in R_{\infty}} D_{k, \bsa }
\) is an open set of \( V \).
Since \( \bigcup_{k \in \NN} D_k = V \), any compact set in \( V \) is
a subset of \( D_k \) for sufficiently large \( k \).
Hence it is enough to show that the set
\[
 W_k :=
 \{ \; w \in W( \wR ) \; | \; w ( D_k ) \cap D_k \neq \emptyset \; \}
\]
is finite for any \( k \in \NN \).
Let \( w \in W_k \) and \( \bsa \in R_{\infty} \).
We write \( \wbsb :=w(q( \bsa )) \) and \( \bsb := p_2 ( \wbsb) \).
Then we have \( \bsb \in R_{\infty} \) and \( w( D_{k, \bsa} )=
\{ \; \bs{x} \in V \; | \; | f_{\wbsb} ( \bs{x} )| < k u_{\bsa} \; \}
\) by (\ref{eq:Afwi}).
Because \(
D_{k, \bsb } \cap w( D_{k, \bsa } ) \neq \emptyset \) holds,
we have \( | p_1 ( \wbsb -q( \bsb ))|<k( u_{\bsa} + u_{\bsb} ) \).
We remark that the set
\[
 X_{\bsa} :=
 \{ \; \wbs{v}_{\bsa} \in \wR \; | \;
 p_2 ( \wbs{v}_{\bsa} ) \in R_{\infty} , \;
 | p_1 ( \wbs{v}_{\bsa} -q( p_2 ( \wbs{v}_{\bsa} )))|
 <k( u_{\bsa} + u_{ p_2 ( \wbs{v}_{\bsa} )} )
 \; \}
\]
is finite and that \( w(q( \bsa )) \in X_{\bsa} \) holds,
for each \( \bsa \in R_{\infty} \).
Since \( R_1 \) and \( R_{\infty} \) is finite, \(
w( R_1 )= R_1 \), \( w|_{ \wV_0 } \) is the identity map
and we assume the condition {\bf{(AR1)}}, the set \( W_k \) is finite.
\end{proof}

\bigskip

Here we prepare one elementary lemma,
which is used repeatedly in this section.
\begin{lemm}
\label{lemm:Atkn}
Let \( S \) be a nonempty subset of \( \wV \) which is
at most countable and let \( \wbs{p} \in \wV_+ \cup \{ \bs{0}_{\wV} \} \).
Then there exists a nonzero vector \( \wbsn \in \wV \) such that
\[
 \{ \; \wbs{s} \in S \; | \; \iprd{\wbs{s}}{\wbsn} =0 \; \}
 =
 \RR \wbs{p} \cap S
 \quad \text{and} \quad
 p_1 ( \wbsn )>0
 .
\]
\end{lemm}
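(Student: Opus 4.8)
The plan is to transport the proof of Lemma \ref{lemm:Ftkn} to the $(N+1)$-dimensional Euclidean space $\wV$, equipped with the full inner product $\iprd{\cdot}{\cdot}$, and then to force the extra condition $p_1(\wbsn) > 0$ by throwing in one additional hyperplane and, if necessary, a sign flip. For $\wbs{t} \in \wV$ I write $\wV_{\wbs{t}} := \{ \wbs{v} \in \wV \mid \iprd{\wbs{t}}{\wbs{v}} = 0 \}$ for the orthogonal complement of $\RR\wbs{t}$. Exactly as in Lemma \ref{lemm:Ftkn}, for each $\wbs{s} \in S \setminus \RR\wbs{p}$ the intersection $\wV_{\wbs{p}} \cap \wV_{\wbs{s}}$ is a codimension-one subspace of $\wV_{\wbs{p}}$, since $\wbs{s} \notin \RR\wbs{p} = (\wV_{\wbs{p}})^\perp$ makes $\iprd{\wbs{s}}{\cdot}$ restrict to a nonzero linear functional on $\wV_{\wbs{p}}$.

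The only genuinely new ingredient is the subspace $P_0 := \wV_{\wbs{p}} \cap \{ \wbs{v} \in \wV \mid p_1(\wbs{v}) = 0 \}$, and I would first check that it is a \emph{proper} subspace of $\wV_{\wbs{p}}$. This is precisely where the hypothesis $\wbs{p} \in \wV_+ \cup \{ \bs{0}_{\wV} \}$ enters. Writing $\bs{e}_0 := (1, \bs{0}_V)$ so that $p_1(\wbs{v}) = \iprd{\bs{e}_0}{\wbs{v}}$, the functional $p_1$ vanishes identically on $\wV_{\wbs{p}}$ exactly when $\bs{e}_0 \in (\wV_{\wbs{p}})^\perp = \RR\wbs{p}$; since $\bs{e}_0 \neq \bs{0}_{\wV}$, this can happen only if $\wbs{p}$ is a nonzero multiple of $\bs{e}_0$, i.e. a nonzero isotropic vector lying in $\wV_0 \setminus \{ \bs{0}_{\wV} \}$. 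The hypothesis rules out exactly such $\wbs{p}$: for $\wbs{p} = \bs{0}_{\wV}$ one has $\wV_{\wbs{p}} = \wV$, while for $\wbs{p} \in \wV_+$ one has $p_2(\wbs{p}) \neq \bs{0}_V$ and hence $\wbs{p} \notin \wV_0$. In either case $p_1$ is not identically zero on $\wV_{\wbs{p}}$, so $P_0$ is proper. (It is this single step that forces the restriction on $\wbs{p}$; a nonzero isotropic $\wbs{p}$ would give $P_0 = \wV_{\wbs{p}}$ and block the sign condition.)

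Next I would note that, $S$ being at most countable, the family consisting of the countably many codimension-one subspaces $\wV_{\wbs{p}} \cap \wV_{\wbs{s}}$ together with the single proper subspace $P_0$ is still countable. Since $\wV_{\wbs{p}}$ is a real vector space of positive dimension ($N$ when $\wbs{p} \neq \bs{0}_{\wV}$, and $N+1$ otherwise), it cannot be the union of countably many proper subspaces, just as invoked in Lemma \ref{lemm:Ftkn}. Hence I may pick a vector $\wbsn'$ of $\wV_{\wbs{p}}$ lying outside all of them, so that $\iprd{\wbs{p}}{\wbsn'} = 0$, $\iprd{\wbs{s}}{\wbsn'} \neq 0$ for every $\wbs{s} \in S \setminus \RR\wbs{p}$, and $p_1(\wbsn') \neq 0$.

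Finally I would set $\wbsn := \wbsn'$ if $p_1(\wbsn') > 0$ and $\wbsn := -\wbsn'$ otherwise; passing to $-\wbsn'$ preserves both the relation $\iprd{\wbs{p}}{\cdot} = 0$ and all the relations $\iprd{\wbs{s}}{\cdot} \neq 0$, so $p_1(\wbsn) > 0$ now holds alongside them. The claimed equality is then immediate: $\RR\wbs{p} \cap S$ lies in the zero set because $\wbsn \perp \wbs{p}$, and conversely every $\wbs{s} \in S$ with $\iprd{\wbs{s}}{\wbsn} = 0$ must lie in $\RR\wbs{p}$, since all $\wbs{s} \in S \setminus \RR\wbs{p}$ were arranged to be non-orthogonal to $\wbsn$. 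I expect the properness of $P_0$ to be the one step requiring care; the rest is the argument of Lemma \ref{lemm:Ftkn} carried over to $\wV$, followed by the trivial sign adjustment.
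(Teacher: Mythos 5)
Your proof is correct and is essentially the paper's own argument: the paper's proof of this lemma literally just says ``a similar proof to Lemma \ref{lemm:Ftkn} works,'' and your write-up is exactly that adaptation, carried over to \( \wV \) with the full inner product. The two details you add beyond Lemma \ref{lemm:Ftkn} --- properness of \( P_0 = \wV_{\wbs{p}} \cap \ker p_1 \) (which is where the hypothesis \( \wbs{p} \in \wV_+ \cup \{ \bs{0}_{\wV} \} \) is genuinely needed) and the final sign flip --- are precisely the steps the paper leaves implicit, and you handle both correctly.
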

\begin{proof}
A similar proof to {\bf{Lemma \ref{lemm:Ftkn}}} works.
\end{proof}
Let \( \wR \) be an affine root system.
We take a nonzero vector \( \wbsn \in \wV \) according
to {\bf{Lemma \ref{lemm:Atkn}}} with \( S= \wR \) and \( \wbs{p}
= \bs{0}_{\wV} \).
We denote by
\[
 \wR^+ := \{ \; \wbsa \in \wR \; | \; \iprd{\wbsa}{\wbsn} >0 \; \}
\]
the set of positive roots.
We remark that \( \wR= \wR^+ \sqcup (- \wR^+ ) \) holds,
where \( - \wR^+ := \{ \; - \wbsa \; | \; \wbsa \in \wR^+ \; \} \).
\begin{defi}
We say a root system \( \wR \) is reduced
if \( \wR \) satisfies the following condition:
\begin{description}
\item[(AR5)]
For any \( \wbsa \in \wR \), \(
\RR \wbsa \cap \wR = \{ \wbsa , - \wbsa \} \).
\end{description}
\end{defi}
Let \( \wR \) be a reduced affine root system.
For \( w \in W( \wR ) \), let
\[
 \wR (w) := \{ \; \wbsa \in \wR^+ \; | \; w( \wbsa ) \not\in \wR^+ \; \}
 \quad \text{and} \quad
 s(w) := \sum_{\wbsa \in \wR (w)} \wbsa
 ,
\]
where we define \( s(w)= \bs{0}_{\wV} \) if \( \wR (w)= \emptyset \).
We define the set of its positive imaginary roots by
\[
 \wR^+_{\mathrm{im}} := \{ \; k( u_{\bsa} , \bs{0}_V ) \in \wV_0
 \; | \; \bsa \in B( p_2 ( \wR )) , \; k \in \NN \; \}
\]
and let \( \mult ( \wbsa ) \) be
the multiplicity of \( \wbsa \in \wR^+_{\mathrm{im}} \).
Then, the following equation, which is called
the Weyl-Kac denominator formula or the Macdonald identity,
holds (cf.~\cites{Ka1,Ma1}):
\[
 \left( \prod_{\wbsa \in \wR^+_{\mathrm{im}}}
 \left( 1- e^{\wbsa} \right)^{\mult ( \wbsa )} \right)
 \left( \prod_{\wbsa \in \wR^+}
 \left( 1- e^{\wbsa} \right) \right)
 =
 \sum_{w \in W( \wR )} ( \det w) e^{s(w)}
 .
\]
\begin{defi}
We say a set \( \wR \subset \wV_+ \) is reducible
if there exist \( \wR_1 \) and \( \wR_2 \) such
that \( \wR = \wR_1 \sqcup \wR_2 \), \(
\wR_1 \neq \emptyset \), \( \wR_2 \neq \emptyset \) and \(
\iiprd{\wbsa}{\wbsb} =0 \) for
all \( \wbsa \in \wR_1 \) and for all \( \wbsb \in \wR_2 \).
We say a set \( \wR \) is irreducible
if it is not reducible.
\end{defi}
\subsection{Statement for the affine case}

Let \( m \in \map ( \wV, \NNI ) \).
Here we assume that \( m \) satisfies
the following properties {\bf{(A)}}:
\[
 \text{\bf{(A)}} \quad
 \left\{
 \begin{array}{l}
 \bullet \;
 m( \bs{0} )=0.
 \\[4pt]
 \bullet \;
 \text{The support \( S(m):= 
 \{ \; \wbs{v} \in \wV \; | \; m( \wbs{v} ) \neq 0 \; \} 
 \) is not empty.}
 \\[4pt]
 \bullet \;
 \text{For any \( \wbsn \in \wV \) satisfying \( p_1 ( \wbsn )>0 \) and
 for any \( C \in \RRP \),}\\
 \quad \text{\( \{ \; \wbs{s} \in S(m) \; | \;
 \iprd{\wbs{s}}{\wbsn} \leqq C \; \} \) is a finite set}.
 \\[4pt]
 \bullet \;
 \text{\( S(m) \) spans \( \wV \) as an \( \RR \)-vector space. }
 \\[4pt]
 \bullet \;
 \text{\( S(m)_{\mathrm{re}} :=
 \left\{ \; \wbs{s} \in S(m) \; \middle| \;
 \iinrm{\wbs{s}} >0 \; \right\} \) is irreducible.}
 \end{array}
 \right.
\]
We remark that the set \( S(m) \) is at most countable
by the third property of {\bf{(A)}}.
We define
\[
 F(m) := \prod_{\wbs{s} \in S(m)}
 \left( 1- e^{\wbs{s}} \right)^{m( \wbs{s} )}
\]
as a formal series.
We define \( \Lambda (m) \in \wV \) by
\[
 \Lambda (m) := \left\{ \; \wbsl \in \wV \; \middle| \;
 c_{\wbsl} \neq 0 \; \right\}
 \qquad \left(
 F(m)= \sum_{\wbsl} c_{\wbsl} e^{\wbsl}
 \right)
 .
\]

\bigskip

Our assertion in this section is as follows.
\begin{theo}
\label{theo:Aaim}
Under the assumption {\bf{(A)}},
the following two conditions on \( m \) are equivalent:
\begin{description}
\item[(A1)]
\( \Lambda (m) \) is a subset of a paraboloid.
Namely, there exist \( \wbs{c} \in \wV \) and \( r>0 \) such
that \( p_1 ( \wbsl - \wbs{c} )=r \iinrm{\wbsl - \wbs{c}}^2 \) holds for
any \( \wbsl \in \Lambda (m) \).
\item[(A2)]
\( S(m)_{\mathrm{re}} \cap (-S(m)_{\mathrm{re}} ) = \emptyset \), \( \wR :=
S(m)_{\mathrm{re}} \sqcup (-S(m)_{\mathrm{re}} ) \) is
an irreducible reduced affine root system
of rank \( N \) and \( m \) corresponds
to the index of Weyl-Kac denominator formula of
the affine root system, i.e.,
\[
 \left\{ \begin{array}{ll}
 m( \wbs{s} )= 1 & ( \; \wbs{s} \in S(m)_{\mathrm{re}} \; ) \\
 m( \wbs{s} )+ m(- \wbs{s} )= \mult ( \wbs{s} ) &
 ( \; \wbs{s} \in S(m) \setminus S(m)_{\mathrm{re}} \; )
 \end{array} \right.
 .
\]
\end{description}
\end{theo}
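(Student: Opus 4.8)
The plan is to mirror the finite case (Theorem \ref{theo:Faim}), replacing the sphere by the paraboloid and the fact ``a line meets a sphere in at most two points'' by its affine analogue: a line whose direction $\wbsa$ satisfies $\iinrm{\wbsa}>0$ meets the paraboloid in at most two points (substituting a linear parametrisation into $p_1(\wbsl-\wbs{c})=r\iinrm{\wbsl-\wbs{c}}^2$ yields a quadratic in the parameter whose leading coefficient $r\iinrm{\wbsa}^2$ is nonzero), whereas a line in an imaginary direction (inside $\wV_0$) meets it in exactly one point, since the defining equation determines $p_1$ from $p_2$, so the paraboloid is a graph over $V$ and meets every fibre of $p_2$ in a single point. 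Two preliminaries carry over: the exact analogue of Lemma \ref{lemm:Fr2s} (for real and imaginary $\wbsb$ alike) translates $\Lambda(m)$ and so preserves {\bf{(A1)}}; and, since $S(m)$ is now infinite, every formal manipulation is graded by $\iprd{\cdot}{\wbsn}$ for a vector $\wbsn$ with $p_1(\wbsn)>0$ furnished by Lemma \ref{lemm:Atkn}, the third clause of {\bf{(A)}} guaranteeing finiteness below each bound so that the series are well defined.

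For {\bf{(A2)}}$\Rightarrow${\bf{(A1)}} I would invoke the Macdonald identity stated above. Given {\bf{(A2)}}, the map $m$ is precisely its index (real exponents $1$, imaginary exponents $\mult$, the split of each imaginary multiplicity between $\pm\wbs{s}$ being absorbed by the sign-flip lemma), so the identity gives $\Lambda(m)\subseteq\{\,s(w)\mid w\in W(\wR)\,\}$. It then remains to see that the exponents $s(w)=\wbsr-w(\wbsr)$ lie on a paraboloid, which is the affine counterpart of the finite isometry argument: $W(\wR)$ preserves the quadratic form underlying $\iinrm{\cdot}$, and writing out invariance of the (nondegenerate, extended) affine form for $s(w)$ yields the relation $\iinrm{p_2(s(w))}^2=2\iiprd{p_2(s(w))}{\wbsr}+2h^{\vee}p_1(s(w))$, which is exactly the equation of a paraboloid with $r=1/(2h^{\vee})$. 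Equivalently one computes directly the action of the translation subgroup of $W(\wR)$ on $\wbsr$.

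For {\bf{(A1)}}$\Rightarrow${\bf{(A2)}} I would first treat the real roots by transcribing the three finite lemmas. The analogue of Lemma \ref{lemm:Fred}, applied with $\bs{p}=\wbsa$ for $\wbsa\in S(m)_{\mathrm{re}}$, intersects $\Lambda(m)$ with $\iprd{\cdot}{\wbsn}=0$ to land on the line $\RR\wbsa$; being also on the paraboloid it has at most two points, forcing $\RR\wbsa\cap S(m)=\{\wbsa\}$ and $m(\wbsa)=1$, which gives {\bf{(AR5)}} and $m\equiv1$ on $S(m)_{\mathrm{re}}$. The analogue of Lemma \ref{lemm:Fint} reruns the minimal-counterexample argument with the identity (\ref{eq:Fcls}); here the paraboloid enters through its graph property, so that on $L=\wbsb+\RR\wbsa$ the two intersection parameters $t_1=0,\ t_2=k$ have sum $t_1+t_2=1-\frac{2\iiprd{\wbsa}{\wbsb}}{\iinrm{\wbsa}^2}$, whence $\frac{2\iiprd{\wbsa}{\wbsb}}{\iinrm{\wbsa}^2}=1-k\in\ZZ$ (the value $0$ being automatic when $\wbsb$ is imaginary), yielding {\bf{(AR3)}}. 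The analogue of Lemma \ref{lemm:Fcls} gives {\bf{(AR2)}} by the same invariance-under-$w_{\wbsa}$ computation. Condition {\bf{(AR1)}} is part of {\bf{(A)}}; {\bf{(AR4')}} follows from the third clause of {\bf{(A)}} with $\wbsn=(1,\bs{0})$ and the symmetry $\wR=S(m)_{\mathrm{re}}\sqcup(-S(m)_{\mathrm{re}})$, and then {\bf{(AR4)}} by Proposition \ref{prop:Afnc}; irreducibility is assumed in {\bf{(A)}}. Hence $\wR$ is an irreducible reduced affine root system of rank $N$.

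It remains to pin down $m$ on the imaginary roots, which is the genuinely new step. Now Macdonald applies to $\wR$; after aligning $S(m)_{\mathrm{re}}$ with $\wR^+$ by the sign-flip lemma, the real part of $F(m)$ equals $P:=\prod_{\wbsa\in\wR^+}(1-e^{\wbsa})$ up to a unit, so $F(m)=(I/I_0)\,\Phi$, where $\Phi$ is the Macdonald right-hand side, $I_0=\prod_{\wbsa\in\wR^+_{\mathrm{im}}}(1-e^{\wbsa})^{\mult(\wbsa)}$, and $I=\prod_{\wbs{s}\in S(m)\setminus S(m)_{\mathrm{re}}}(1-e^{\wbs{s}})^{m(\wbs{s})}$; absorbing signs, $I$ is a monomial times $\prod_{\wbs{v}}(1-e^{\wbs{v}})^{m(\wbs{v})+m(-\wbs{v})}$ over positive imaginary $\wbs{v}$. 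By {\bf{(A2)}}$\Rightarrow${\bf{(A1)}} applied to $\wR$ the support of $\Phi$ lies on the paraboloid, hence meets each fibre of $p_2$ in at most one point; the factor $I/I_0$ is supported in $\wV_0$ and so acts within fibres, whence $F(m)$ can stay on the paraboloid only if $I/I_0$ is a single monomial. Since the $(1-e^{\wbs{v}})$ are non-units, this forces every exponent $(m(\wbs{v})+m(-\wbs{v}))-\mult(\wbs{v})$ to vanish, i.e. $m(\wbs{s})+m(-\wbs{s})=\mult(\wbs{s})$ for all imaginary $\wbs{s}$, completing {\bf{(A2)}}. The main obstacle is exactly this last step: it has no finite counterpart and requires combining the Macdonald identity for the already-reconstructed $\wR$ with the graph-over-$V$ property of the paraboloid, the pervasive infiniteness of $S(m)$ being the accompanying technical nuisance handled throughout by the grading from {\bf{(A)}}.
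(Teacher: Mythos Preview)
Your proposal is correct and follows essentially the same route as the paper: the three real-root steps are the affine analogues of Lemmas \ref{lemm:Fred}--\ref{lemm:Fcls} (exactly the paper's Lemmas \ref{lemm:Ared}--\ref{lemm:Acls}), {\bf{(A2)}}$\Rightarrow${\bf{(A1)}} is obtained from the Macdonald identity (the paper simply cites \cite{Ma1}*{Corollary 7.6} rather than computing with $h^{\vee}$), and your imaginary-root step is the paper's Lemma \ref{lemm:Acdm} recast via the ratio $I/I_0$ instead of the cross-multiplied identity $F(m)\,I_0=F(m_0)\,I$. One small imprecision to fix: the factors $(1-e^{\wbs{v}})$ \emph{are} units in the completed group ring in which these infinite products live, so ``non-units'' is not why the exponents must vanish; the correct justification is induction on the minimal $p_1(\wbs{v})$ with nonzero exponent (equivalently a logarithm/M\"obius argument), which the paper sidesteps by reading off $I=I_0$ and $F(m)=F(m_0)$ directly from the graph-over-$V$ property of both paraboloids.
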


\bigskip

Here we prepare one lemma for later use.
\begin{lemm}
\label{lemm:Ar2s}
Let \( \wbsb \in S(m) \) and \( m^{\prime} \) be
a map which is defined by
\[
 m^{\prime} ( \wbs{v} ):=
 \left\{ \begin{array}{ll} m( \wbs{v} )-1 & ( \wbs{v} = \wbsb ) \\
 m( \wbs{v} )+1 & ( \wbs{v} =- \wbsb ) \\
 m( \wbs{v} ) & (\text{otherwise}) \end{array} \right.
 .
\]
Then we have \( \Lambda ( m^{\prime} ) = \Lambda (m) - \wbsb
:= \left\{ \; \wbsl - \wbsb \; \middle| \;
\wbsl \in \Lambda (m) \; \right\} \).
Thus the condition {\bf{(A1)}} holds for \( m^{\prime} \) if
and only if the condition {\bf{(A1)}} holds for \( m \).
\end{lemm}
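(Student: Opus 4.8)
The plan is to mimic the proof of the finite case in Lemma~\ref{lemm:Fr2s} essentially verbatim, since the only change is that we now work in the larger space \( \wV \) and must be slightly more careful because \( S(m) \) may be infinite and \( F(m) \) is a formal series rather than a polynomial. First I would compute the effect on \( F(m) \) of decreasing the multiplicity at \( \wbsb \) by one while increasing it at \( -\wbsb \) by one. Exactly as before, the factor \( (1-e^{\wbsb})^{m(\wbsb)} \) in \( F(m) \) is replaced by \( (1-e^{\wbsb})^{m(\wbsb)-1} \) and the factor \( (1-e^{-\wbsb})^{m(-\wbsb)} \) is replaced by \( (1-e^{-\wbsb})^{m(-\wbsb)+1} \), while every other factor is unchanged. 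This yields the identity
\[
 \left( 1- e^{\wbsb} \right) F ( m^{\prime} )
 =
 \left( 1- e^{- \wbsb} \right) F(m)
 ,
\]
which holds as formal series.

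Next I would use the simple algebraic identity \( 1 - e^{-\wbsb} = -e^{-\wbsb}(1-e^{\wbsb}) \) to cancel the common factor \( (1-e^{\wbsb}) \). Since \( \wbsb \in S(m) \) we have \( \wbsb \neq \bs{0}_{\wV} \), so \( e^{\wbsb} \neq 1 \) and the factor \( (1-e^{\wbsb}) \) is not a zero divisor in the relevant ring of formal series; hence I may cancel it to obtain
\[
 F ( m^{\prime} )= - e^{- \wbsb} F(m)
 .
\]
Multiplication by the invertible monomial \( -e^{-\wbsb} \) simply translates every exponent appearing in \( F(m) \) by \( -\wbsb \) and rescales its coefficient by \( -1 \), so it preserves the property of a coefficient being nonzero. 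Reading off the supports therefore gives \( \Lambda(m^{\prime}) = \Lambda(m) - \wbsb \), as claimed.

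Finally I would deduce the equivalence of condition~\textbf{(A1)} for \( m \) and for \( m^{\prime} \). Since translating a set by a fixed vector \( -\wbsb \) carries any paraboloid of the form \( p_1(\wbsl-\wbs{c}) = r\iinrm{\wbsl-\wbs{c}}^2 \) to the congruent paraboloid with centre \( \wbs{c}-\wbsb \) and the same \( r \), the set \( \Lambda(m) \) lies on such a paraboloid if and only if \( \Lambda(m^{\prime}) = \Lambda(m)-\wbsb \) does. I expect the only point requiring genuine care, as opposed to routine transcription from the finite case, is justifying that the cancellation of \( (1-e^{\wbsb}) \) is legitimate in the formal-series setting: one must verify that these manipulations make sense for the formal series governed by the finiteness condition in the third bullet of~\textbf{(A)}, rather than for an honest polynomial as in Lemma~\ref{lemm:Fr2s}. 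This is the main (mild) obstacle, and it is handled by noting that all operations involved are well defined on the completed group ring in which \( F(m) \) lives.
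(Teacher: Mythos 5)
Your proof is correct and is essentially the paper's own argument: the paper proves this lemma by saying "a similar proof to Lemma~\ref{lemm:Fr2s} works," and your computation \( (1-e^{\wbsb})F(m') = (1-e^{-\wbsb})F(m) \), followed by cancellation to get \( F(m') = -e^{-\wbsb}F(m) \) and reading off the translated support, is exactly that proof transcribed to \( \wV \). Your added remark that the cancellation of \( (1-e^{\wbsb}) \) must be justified in the completed group ring (where it is not a zero divisor, by the finiteness condition in the third bullet of \textbf{(A)}) is a legitimate refinement of a point the paper leaves implicit, not a deviation from its approach.
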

\begin{proof}
A similar proof to {\bf{Lemma \ref{lemm:Fr2s}}} works.
\end{proof}
\subsection{Proof for the affine case}

The proof for {\bf{(A2)}}\( \Rightarrow \){\bf{(A1)}} is easy.
We take a nonzero vector \( \wbsn \in \wV \) according
to {\bf{Lemma \ref{lemm:Atkn}}} with \( S=S(m) \) and \( \wbs{p}
= \bs{0}_{\wV} \).
Just as in the case of finite root systems,
we may assume \( S(m) \subset \{ \; \wbs{v} \in \wV \; | \;
\iprd{\wbs{v}}{\wbsn} >0 \; \} \) and
\[
 \left\{ \begin{array}{ll}
 m( \wbs{s} )= 1 & ( \; \wbs{s} \in S(m)_{\mathrm{re}} = \wR^+ \; ) \\
 m( \wbs{s} )= \mult ( \wbs{s} ) &
 ( \; \wbs{s} \in S(m) \setminus S(m)_{\mathrm{re}} =
 \wR^+_{\mathrm{im}} \; )
 \end{array} \right.
 ,
\]
by {\bf{Lemma \ref{lemm:Ar2s}}}.
Then, \( F(m) \) is just the left-hand side
of the Weyl-Kac denominator formula of \( \wR \) and
hence \( \Lambda (m) \subset
\left\{ \; s(w) \; \middle| \; w \in W(R) \; \right\} \).
Therefore, because \( \wR \) is irreducible, \( \Lambda (m) \) is
a subset of a paraboloid by \cite{Ma1}*{Corollary 7.6}.
This completes the proof of {\bf{(F2)}}\( \Rightarrow \){\bf{(F1)}}.

\bigskip

We show {\bf{(A1)}}\( \Rightarrow \){\bf{(A2)}} step by step.
Throughout the proof, the key fact is that the intersection
of a paraboloid and a line has at most two points.

First, we show {\bf{(AR5)}}.
\begin{lemm}
\label{lemm:Ared}
Assume {\bf{(A1)}} and let \( \wbsa \in S(m)_{\mathrm{re}} \).
Then we have \( \RR \wbsa \cap S(m)_{\mathrm{re}} = \{ \wbsa \} \) and \(
m( \wbsa )=1 \).
\end{lemm}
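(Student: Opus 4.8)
The plan is to mirror the proof of the finite case (\textbf{Lemma \ref{lemm:Fred}}), replacing the sphere by the paraboloid and taking care of the two genuinely new features of the affine setting: the support $S(m)$ may be infinite, and the intersection of the line $\RR\wbsa$ with the paraboloid must be controlled. First I would invoke \textbf{Lemma \ref{lemm:Atkn}} with $S=S(m)$ and $\wbs{p}=\wbsa$ — legitimate since $\wbsa\in S(m)_{\mathrm{re}}$ gives $\iinrm{\wbsa}>0$, so $\wbsa\in\wV_+$ — to obtain $\wbsn$ with $p_1(\wbsn)>0$ and $\{\;\wbs{s}\in S(m)\mid\iprd{\wbs{s}}{\wbsn}=0\;\}=\RR\wbsa\cap S(m)$. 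Here the finiteness clause of \textbf{(A)} is crucial: applied to this $\wbsn$ with any $C\in\RRP$ it shows that only finitely many $\wbs{s}\in S(m)$ satisfy $\iprd{\wbs{s}}{\wbsn}\leqq 0$. In particular $\RR\wbsa\cap S(m)$ is finite, it coincides with $\RR\wbsa\cap S(m)_{\mathrm{re}}$ (every nonzero multiple of $\wbsa$ has positive affine norm and $\bs{0}_{\wV}\notin S(m)$), and only finitely many $\wbs{s}$ lie strictly on the negative side of $\wbsn$. Through finitely many applications of \textbf{Lemma \ref{lemm:Ar2s}} I may thus assume $\iprd{\wbs{s}}{\wbsn}\geqq 0$ for all $\wbs{s}\in S(m)$; since the reversed vectors all lie off the line $\RR\wbsa$, this changes neither $\RR\wbsa\cap S(m)_{\mathrm{re}}$ nor the value $m(\wbsa)$, and it preserves \textbf{(A1)} because a translate of a paraboloid is again a paraboloid.

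Next I would isolate the part of $\Lambda(m)$ orthogonal to $\wbsn$. Writing $S_0:=\RR\wbsa\cap S(m)$ and factoring $F(m)=F_0\cdot F_+$ with
\[
 F_0 := \prod_{\wbs{s}\in S_0}\left(1-e^{\wbs{s}}\right)^{m(\wbs{s})}, \qquad F_+ := \prod_{\wbs{s}\in S(m)\setminus S_0}\left(1-e^{\wbs{s}}\right)^{m(\wbs{s})},
\]
I note that $F_0$ is a finite product all of whose exponents satisfy $\iprd{\cdot}{\wbsn}=0$, while every exponent occurring in $F_+$ other than $\bs{0}_{\wV}$ satisfies $\iprd{\cdot}{\wbsn}>0$. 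Hence the $\iprd{\cdot}{\wbsn}=0$ layer of $F(m)$ equals $F_0$, which gives
\[
 \Lambda_0 := \{\;\wbsl\in\Lambda(m)\mid\iprd{\wbsl}{\wbsn}=0\;\} = \Lambda(m|_{\RR\wbsa}),
\]
exactly as in the finite case. In particular $\Lambda_0$ lies on the line $\RR\wbsa$.

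Finally I would intersect this line with the paraboloid of \textbf{(A1)}. Parametrising $\RR\wbsa$ as $t\mapsto t\wbsa$ and substituting into $p_1(\wbsl-\wbs{c})=r\iinrm{\wbsl-\wbs{c}}^2$ yields an equation that is linear on the left and quadratic on the right in $t$, with leading coefficient $r\iinrm{\wbsa}^2>0$; the positivity is exactly where $\wbsa\in\wV_+$ is used, and it guarantees that the intersection, hence $\Lambda_0$, has at most two points. On the other hand $F_0$ is non-constant because $\wbsa\in S_0$, so $\Lambda_0$ has at least two points, whence exactly two. The conclusion then follows from the elementary fact that a product of binomials $1-e^{\wbs{s}}$ with the $\wbs{s}$ nonzero and collinear has at least three distinct monomials as soon as the total exponent $\sum_{\wbs{s}\in S_0}m(\wbs{s})$ is at least two; since $\Lambda_0=\Lambda(F_0)$ has only two points, this forces $\sum_{\wbs{s}\in S_0}m(\wbs{s})=1$, i.e.\ $S_0=\{\wbsa\}$ and $m(\wbsa)=1$, which is the desired statement.

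The main obstacle is not the geometry, which is a routine adaptation of \textbf{Lemma \ref{lemm:Fred}}, but the bookkeeping forced by the possibly infinite support: I must ensure that the factorisation $F(m)=F_0\cdot F_+$ and the identification $\Lambda_0=\Lambda(m|_{\RR\wbsa})$ are legitimate as formal series. This is precisely what the finiteness clause of \textbf{(A)}, used through the choice $p_1(\wbsn)>0$, secures — it makes $S_0$ finite, confines the sign-reversals of \textbf{Lemma \ref{lemm:Ar2s}} to finitely many steps, and forces every nontrivial exponent of $F_+$ to be strictly positive against $\wbsn$, so that no cancellation can move a term of $F_+$ into the $\iprd{\cdot}{\wbsn}=0$ layer.
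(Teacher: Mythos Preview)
Your proposal is correct and follows essentially the same route as the paper's proof: choose $\wbsn$ via \textbf{Lemma~\ref{lemm:Atkn}} with $\wbs{p}=\wbsa$, use the finiteness clause of \textbf{(A)} together with \textbf{Lemma~\ref{lemm:Ar2s}} to reduce to $\iprd{\wbs{s}}{\wbsn}\geqq 0$, identify $\Lambda_0$ with $\Lambda(m|_{\RR\wbsa})$, and conclude from the fact that a line meets the paraboloid in at most two points. You supply more detail than the paper does---in particular the explicit observation that $\iinrm{\wbsa}>0$ is what makes the paraboloid equation genuinely quadratic along $\RR\wbsa$, and the justification that only finitely many sign reversals are needed---but the argument is the same.
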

\begin{proof}
The proof is very similar to that of {\bf{Lemma \ref{lemm:Fred}}}.
We take a nonzero vector \( \wbsn \in \wV \) according
to {\bf{Lemma \ref{lemm:Atkn}}} with \( S=S(m) \) and \(
\wbs{p} = \wbsa \).
By the third condition of {\bf{(A)}} and by {\bf{Lemma \ref{lemm:Ar2s}}},
we may assume \( \iprd{\wbs{s}}{\wbsn} \geqq 0 \) for
any \( \wbs{s} \in S(m) \) without loss of generality.
Then, because
\[
 S_0 :=
 \{ \; \wbs{s} \in S(m) \; | \; \iprd{\wbs{s}}{\wbsn} =0 \; \}
 = \RR \wbsa \cap S(m)
 ,
\]
we have
\[
 \Lambda_0 :=
 \{ \; \wbsl \in \Lambda (m) \; | \; \iprd{\wbsl}{\wbsn} =0 \; \}
 = \Lambda ( m |_{\RR \wbsa} )
 ,
\]
where \( m |_{\RR \wbsa} \in \map ( \RR \wbsa , \NNI )\) is
the restriction of \( m \) to
the subspace \( \RR \wbsa \subset \wV \).
Hence \( \Lambda_0 \) is on the line \( \RR \wbsa \).
On the other hand,
since \( \Lambda_0 \subset \Lambda (m) \), \( \Lambda_0 \) is
on a paraboloid, by the assumption {\bf{(A1)}}.
Therefore, \( \Lambda_0 \) is on the intersection
of the line and the paraboloid, which has at most two points.
However, since \( \wbsa \in S_0 \) and \( S_0 \) is
finite, \( \Lambda_0 \) has at least two points.
Consequently, \( \Lambda_0 \) has
exactly two points, which should be \( \bs{0}_{\wV} \) and \( \wbsa \).
This means \( \RR \wbsa \cap S(m)_{\mathrm{re}} = \{ \wbsa \}
\) and \( m( \wbsa )=1 \).
\end{proof}
Second, we show the {\bf{(AR3)}}.
\begin{lemm}
\label{lemm:Aint}
Assume {\bf{(A1)}} and let \( \wbsa \in S(m)_{\mathrm{re}} \).
Then we have \(
\frac{2 \iiprd{\wbsa}{\wbsb}}{\iinrm{\wbsa}^2}
\in \ZZ \) for any \( \wbsb \in S(m)_{\mathrm{re}} \).
\end{lemm}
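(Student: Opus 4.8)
The plan is to mirror the proof of Lemma~\ref{lemm:Fint} almost line for line, replacing the sphere of the finite case by the paraboloid of {\bf{(A1)}} and using the finiteness built into {\bf{(A)}} at every point where the finite proof invoked finiteness of $S(m)$. First I would choose a nonzero $\wbsn \in \wV$ by Lemma~\ref{lemm:Atkn} with $S = S(m)$ and $\wbs{p} = \wbsa$, so that $\iprd{\wbsa}{\wbsn} = 0$, $p_1(\wbsn) > 0$ and $\{\,\wbs{s} \in S(m) \mid \iprd{\wbs{s}}{\wbsn} = 0\,\} = \RR\wbsa \cap S(m) = \{\wbsa\}$, the last equality by Lemma~\ref{lemm:Ared}. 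Only finitely many $\wbs{s}$ have $\iprd{\wbs{s}}{\wbsn} \leqq 0$ by the third property of {\bf{(A)}}, so after finitely many applications of Lemma~\ref{lemm:Ar2s} I may assume $\iprd{\wbs{s}}{\wbsn} > 0$ for every $\wbs{s} \in S(m) \setminus \{\wbsa\}$; this is harmless since $\frac{2\iiprd{\wbsa}{\wbsb}}{\iinrm{\wbsa}^2}$ only changes sign under $\wbsb \mapsto -\wbsb$, so its integrality is unaffected.

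The one genuinely new ingredient is the paraboloid analogue of the equidistance relation. As in the proof of Lemma~\ref{lemm:Ared} one has $\{\bs{0}_{\wV}, \wbsa\} \subset \Lambda(m)$, so the centre $\wbs{c}$ of {\bf{(A1)}} satisfies $p_1(\bs{0}_{\wV} - \wbs{c}) = r\iinrm{\wbs{c}}^2$ and $p_1(\wbsa - \wbs{c}) = r\iinrm{\wbsa - \wbs{c}}^2$. I would then prove the substitute for the sphere computation: whenever $\wbsl$ and $\wbsl + k\wbsa$ both lie on the paraboloid,
\[
 \frac{2\iiprd{\wbsa}{\wbsl}}{\iinrm{\wbsa}^2} = -k + 1 .
\]
This is the \emph{same} conclusion as in the finite case; it follows because $\iinrm{\cdot}$ depends only on $p_2$, so subtracting the two paraboloid equations for $\wbsl$ and $\wbsl + k\wbsa$ and dividing by $k \neq 0$ turns the quadratic relation into a linear one, in which the $p_1(\wbsa)$- and $\iiprd{\wbsa}{\wbs{c}}$-contributions are eliminated using the relation coming from $\{\bs{0}_{\wV}, \wbsa\}$.

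With this in hand I would argue by contradiction as in Lemma~\ref{lemm:Fint}. Suppose some $\wbsb \in S(m)_{\mathrm{re}}$ has $\frac{2\iiprd{\wbsa}{\wbsb}}{\iinrm{\wbsa}^2} \notin \ZZ$; call such a vector bad. Every bad vector has $\iprd{\cdot}{\wbsn} > 0$ (only $\wbsa$ sits at level $0$, and $\wbsa$ is not bad), and by {\bf{(A)}} only finitely many roots lie below any level, so I may pick a bad $\wbsb$ of minimal $\iprd{\wbsb}{\wbsn}$. Since $S(m)$ meets $L := \wbsb + \RR\wbsa$ in finitely many points, all at the single level $\iprd{\wbsb}{\wbsn}$, and since replacing $\wbsb$ by $\wbsb - k\wbsa$ changes $\frac{2\iiprd{\wbsa}{\wbsb}}{\iinrm{\wbsa}^2}$ by an even integer and fixes the level, I may further assume $\wbsb - k\wbsa \notin S(m)$ for all $k > 0$. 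Writing $G := \prod_{\wbs{s} \in S(m) \setminus \{\wbsa\}}(1 - e^{\wbs{s}})^{m(\wbs{s})}$ and using $m(\wbsa) = 1$, the part of $F(m)$ supported on $L$ equals $(1 - e^{\wbsa})$ times the part of $G$ supported on $L$, hence is a multiple of $(1 - e^{\wbsa})$. The coefficient $c_{\wbsb}$ is nonzero: the bare root $\wbsb$ (with $m(\wbsb) = 1$ by Lemma~\ref{lemm:Ared}) contributes $-e^{\wbsb}$, whereas every other exponent reaching $\wbsb$ is a sum of roots of level strictly below $\iprd{\wbsb}{\wbsn}$, possibly together with $\wbsa$; all of these — including the imaginary roots, for which $\iiprd{\wbsa}{\cdot} = 0$ — are integral against $\wbsa$ by minimality, so their sum is integral against $\wbsa$ and cannot equal the bad $\wbsb$. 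Thus $\wbsb \in L \cap \Lambda(m)$, and being the support of a nonzero multiple of $(1 - e^{\wbsa})$ this set has at least two points; since a line meets the paraboloid in at most two points, it has exactly two, say $\wbsb$ and $\wbsb + k\wbsa$ with $k \neq 0$. Divisibility of $\sum_{\wbsl \in L \cap \Lambda(m)} c_{\wbsl} e^{\wbsl}$ by $(1 - e^{\wbsa})$ forces the two exponents to be commensurable along $\wbsa$, i.e.\ $k \in \ZZ$. Feeding $\wbsl = \wbsb$ into the paraboloid relation then gives $\frac{2\iiprd{\wbsa}{\wbsb}}{\iinrm{\wbsa}^2} = -k + 1 \in \ZZ$, contradicting badness; hence {\bf{(AR3)}} holds.

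The main obstacle is precisely that $S(m)$ is infinite and the imaginary roots may have multiplicity larger than $1$, so the explicit finite expansion~(\ref{eq:Fcls}) of the finite case cannot be transcribed verbatim. I expect to overcome this in two ways. First, the finiteness clause of {\bf{(A)}} ensures that every minimum and every sum occurring above ranges over a finite set. Second, the integrality bookkeeping is insensitive to the imaginary roots, because they are orthogonal to $\wbsa$ under $\iiprd{\cdot}{\cdot}$ and are therefore automatically integral against $\wbsa$; and the decisive divisibility by $(1 - e^{\wbsa})$ rests solely on $m(\wbsa) = 1$ and is unaffected by the multiplicities of the remaining factors.
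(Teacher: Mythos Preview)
Your proposal is correct and follows essentially the same route as the paper: choose $\wbsn$ via Lemma~\ref{lemm:Atkn}, normalize via Lemma~\ref{lemm:Ar2s}, use $\{\bs{0}_{\wV},\wbsa\}\subset\Lambda(m)$ to extract the linear relation from the paraboloid, pick a bad $\wbsb$ of minimal level, and analyze $L\cap\Lambda(m)$ exactly as in Lemma~\ref{lemm:Fint}. Where the paper simply writes ``by using similar argument in the proof of Lemma~\ref{lemm:Fint}'', you spell out the two points that genuinely need a word in the affine setting --- that the finiteness clause of {\bf{(A)}} supplies every finiteness used, and that imaginary roots (possibly of multiplicity $>1$) are harmless because they are $\iiprd{\cdot}{\cdot}$-orthogonal to $\wbsa$ --- and your divisibility/coset argument for $k\in\ZZ$ is a clean repackaging of the paper's ``$\wbsb+k\wbsa$ must come from $S_2$'' step.
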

\begin{proof}
The proof is similar to that of {\bf{Lemma \ref{lemm:Fint}}},
but since \( \Lambda (m) \) is a subset of a paraboloid here,
calculations are slightly different.
We take a nonzero vector \( \wbsn \in \wV \)
as in the previous proof.
Then we may assume \( \iprd{\wbsa}{\wbsn} =0 \) and \(
\iprd{\wbs{s}}{\wbsn} > 0 \) for
any \( \wbs{s} \in S(m) \setminus \{ \wbsa \} \).
According to the proof of {\bf{Lemma \ref{lemm:Ared}}},
we have \( \{ \bs{0}_{\wV} , \wbsa \} \subset \Lambda (m) \).
Therefore, \( \wbs{c} \in \wV \) and \( r>0 \) in
the assumption {\bf{(A1)}} satisfy the conditions \(
- p_1 ( \wbs{c} )=r \iinrm{\wbs{c}}^2 \) and \(
p_1 ( \wbsa )- p_1 ( \wbs{c} )=r \iinrm{\wbsa - \wbs{c}}^2 \).
Hence we have \(
p_1 ( \wbsa )=r( \iinrm{\wbsa}^2 -2 \iiprd{\wbsa}{\wbs{c}} ) \).

We show the assertion by contradiction.
Among the vectors \( \wbsb \in S(m)_{\mathrm{re}} \) satisfying \(
\frac{2 \iiprd{\wbsa}{\wbsb}}{\iinrm{\wbsa}^2} \not\in \ZZ \),
we take one with the minimal value of \( \iprd{\wbsb}{\wbsn} \).
Obviously \( \wbsb \neq \wbsa \),
hence \( \iprd{\wbsb}{\wbsn} >0 \).
Here, we may assume \( \wbsb - k \wbsa \not\in S(m) \) for any \( k>0 \).
Let \( L:= \wbsb + \RR \wbsa \),
which is a line and a subset of \( \{ \; \wbs{l} \in \wV \; | \;
\iprd{\wbs{l}}{\wbsn} = \iprd{\wbsb}{\wbsn} \; \} \).
To show the contradiction,
we investigate the set \( L \cap \Lambda (m) \) precisely.
By using similar argument in the proof
of {\bf{Lemma \ref{lemm:Fint}}}, \( L \cap \Lambda (m) \) has
exactly two points \( \wbsb \) and \( \wbsb +k \wbsa \),
where \( k>0 \) and \( k \in \ZZ \).
Since both \( \wbsb \) and \( \wbsb +k \wbsa \) are on \( \Lambda (m) \),
we have
\[
 p_1 ( \wbsb )- p_1 ( \wbs{c} )
 =
 r \iinrm{\wbsb - \wbs{c}}^2
 \quad \text{and} \quad
 p_1 ( \wbsb +k \wbsa )- p_1 ( \wbs{c} )
 =
 r \iinrm{\wbsb +k \wbsa - \wbs{c}}^2
 .
\]
Hence we have \( p_1 ( \wbsa )=r \left(
2 \iiprd{\wbsa}{ \wbsb - \wbs{c}} +k \iinrm{\wbsa}^2 \right) \).
Therefore, because \(
p_1 ( \wbsa )=r( \iinrm{\wbsa}^2 -2 \iiprd{\wbsa}{\wbs{c}} ) \),
we have \( \iinrm{\wbsa}^2 = 2 \iiprd{\wbsa}{\wbsb} +k
\iinrm{\wbsa}^2 \), namely, we have
\[
 \frac{2 \iiprd{\wbsa}{\wbsb}}{\iinrm{\wbsa}^2} =-k +1 \in \ZZ
 ,
\]
which is a contradiction.
\end{proof}
Third, we show {\bf{(AR2)}}.
\begin{lemm}
\label{lemm:Acls}
Assume {\bf{(A1)}} and let \( \wbsa \in S(m)_{\mathrm{re}} \).
Then we have \( w_{\wbsa} ( \wbsb ) \in S(m)_{\mathrm{re}} \sqcup
(-S(m)_{\mathrm{re}} ) \) for any \( \wbsb \in S(m)_{\mathrm{re}} \).
\end{lemm}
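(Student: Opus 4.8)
The plan is to transcribe the proof of \textbf{Lemma \ref{lemm:Fcls}} into the affine setting, replacing the sphere by the paraboloid of \textbf{(A1)}, the Euclidean product by the affine product $\iiprd{\cdot}{\cdot}$ inside the reflection, and the finiteness of $S(m)$ by the finiteness supplied by the third condition of \textbf{(A)}. First I would pick $\wbsn \in \wV$ by \textbf{Lemma \ref{lemm:Atkn}} exactly as in the proof of \textbf{Lemma \ref{lemm:Aint}}, so that $\iprd{\wbsa}{\wbsn}=0$ and $\iprd{\wbs{s}}{\wbsn}>0$ for every $\wbs{s} \in S(m) \setminus \{ \wbsa \}$; this normalization costs nothing by \textbf{Lemma \ref{lemm:Ar2s}} and leaves $S(m)_{\mathrm{re}} \sqcup (-S(m)_{\mathrm{re}})$ unchanged. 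The key remark is that, since $\iprd{\wbsa}{\wbsn}=0$, formula (\ref{eq:Arfa}) gives $\iprd{w_{\wbsa}(\wbs{v})}{\wbsn}=\iprd{\wbs{v}}{\wbsn}$ for all $\wbs{v}$, so $w_{\wbsa}$ preserves every level set of $\iprd{\cdot}{\wbsn}$ and, in particular, the half-space where $S(m)$ now lives. Moreover $w_{\wbsa}$ is an isometry for $\iinrm{\cdot}$ and fixes $\wV_0$ pointwise, hence it sends real vectors to real vectors. Consequently it suffices to prove that $w_{\wbsa}$ restricts to a bijection of $S(m)_{\mathrm{re}} \setminus \{ \wbsa \}$.

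To this end I would argue by contradiction: among all $\wbsb \in S(m)_{\mathrm{re}} \setminus \{ \wbsa \}$ with $w_{\wbsa}(\wbsb) \notin S(m)$, choose one minimizing $\iprd{\wbsb}{\wbsn}$. Such a minimizer exists because $p_1(\wbsn)>0$, so the third condition of \textbf{(A)} forces each sublevel set to be finite. Writing $L:=\wbsb+\RR\wbsa$ and defining $S_1$, $S_2=S(m)\cap L$ and $\mathcal{T}$ verbatim as in \textbf{Lemma \ref{lemm:Fint}}, I would recall the affine counterpart of (\ref{eq:Fcls}) used in the proof of \textbf{Lemma \ref{lemm:Aint}}. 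The computation there shows $L\cap\Lambda(m)$ has at most two points and, when it has two points $\wbsl_1$ and $\wbsl_2=\wbsl_1+k\wbsa$, that $\frac{2\iiprd{\wbsa}{\wbsl_1}}{\iinrm{\wbsa}^2}=-k+1$, whence $\wbsl_2=w_{\wbsa}(\wbsl_1)+\wbsa$.

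Comparing coefficients, the right-hand side of this affine analogue of (\ref{eq:Fcls}) is then a scalar multiple of $(1-e^{\wbsa})$, which forces $c_{\wbsl_2}=-c_{\wbsl_1}$ and rearranges to
\[
 \sum_{\wbs{s}\in S_2}e^{\wbs{s}}
 =
 \left(\sum_{T\in\mathcal{T}}(-1)^{\#T}e^{\sigma(T)}\right)
 -c_{\wbsl_1}\left(\frac{e^{\wbsl_1}-e^{w_{\wbsa}(\wbsl_1)+\wbsa}}{1-e^{\wbsa}}\right).
\]
Here the minimality of $\wbsb$ makes $S_1$ invariant under $w_{\wbsa}$ (its imaginary elements are fixed since $w_{\wbsa}$ acts trivially on $\wV_0$, and its real elements lie strictly below the minimal level, hence are not counterexamples), while $L$ is invariant because $w_{\wbsa}(\wbs{v})-\wbs{v}\in\RR\wbsa$. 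Thus the first bracket is $w_{\wbsa}$-invariant, and a short direct calculation shows the second bracket is too. Therefore $\sum_{\wbs{s}\in S_2}e^{\wbs{s}}$, and hence $S_2$ itself, is $w_{\wbsa}$-invariant; but $\wbsb\in S_2$ whereas $w_{\wbsa}(\wbsb)\notin S(m)\supset S_2$, a contradiction.

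I expect the only genuine departure from the finite argument to be the bookkeeping forced by imaginary roots: one must check that a counterexample can only be a real root (guaranteed since $w_{\wbsa}$ preserves $\iinrm{\cdot}$ and fixes $\wV_0$) and that the finiteness needed to extract a minimal counterexample now comes from \textbf{(A)}, via the normalization $p_1(\wbsn)>0$ from \textbf{Lemma \ref{lemm:Atkn}}, rather than from finiteness of $S(m)$. Once these are settled, the algebraic core — the $w_{\wbsa}$-invariance of the fraction $\frac{e^{\wbsl_1}-e^{w_{\wbsa}(\wbsl_1)+\wbsa}}{1-e^{\wbsa}}$ — is the same computation as in the finite case and presents no new obstacle.
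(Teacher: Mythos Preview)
Your proposal is correct and follows essentially the same approach as the paper: the paper's own proof is a two-line sketch that says to repeat the argument of \textbf{Lemma~\ref{lemm:Fcls}} after the normalization of \textbf{Lemma~\ref{lemm:Aint}}, noting that $w_{\wbsa}|_{\wV_0}$ is the identity so it suffices to show $w_{\wbsa}$ is a bijection on $S(m)\setminus\{\wbsa\}$, and you have carried out exactly that transcription with the details filled in. Your handling of the two genuinely new points---the finiteness for extracting a minimal counterexample now coming from the third condition of \textbf{(A)} via $p_1(\wbsn)>0$, and the $w_{\wbsa}$-invariance of the imaginary part of $S_1$ via $w_{\wbsa}|_{\wV_0}=\mathrm{id}$---matches what the paper's sketch requires.
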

\begin{proof}
The proof is very similar to that of {\bf{Lemma \ref{lemm:Fcls}}}.
As in the previous proof,
we may assume \( \iprd{\wbsa}{\wbsn} =0 \) and \(
\iprd{\wbs{s}}{\wbsn} > 0 \) for
any \( \wbs{s} \in S(m) \setminus \{ \wbsa \} \).
Since \( w_{\wbsa} |_{\wV_0} \) is the identity transformation,
it is enough to show that the reflection \( w_{\wbsa} \) is a bijection
from \( S(m) \setminus \{ \wbsa \} \) to itself.
We can show it by contradiction,
by using similar argument in the proof
of {\bf{Lemma \ref{lemm:Fcls}}}.
\end{proof}

The condition {\bf{(AR1)}} is included in the assumption {\bf{(A)}} and
the condition {\bf{(AR4')}} easily follows from the assumption {\bf{(A)}}.
Therefore \( \wR :=S(m)_{\mathrm{re}} \sqcup (-S(m)_{\mathrm{re}} ) \) is
a reduced affine root system of rank \( N \) and it is irreducible
from the assumption {\bf{(A)}}.

\bigskip

Finally, we show the condition for \( m \).
Since we have already shown
\( m( \wbs{s} )= 1 \) for \( \wbs{s} \in S(m)_{\mathrm{re}} \),
all that remains to prove is the following lemma.
\begin{lemm}
\label{lemm:Acdm}
Assume {\bf{(A1)}}.
Then \( m( \wbs{s} )+ m(- \wbs{s} )= \mult ( \wbs{s} ) \) holds
for any \( \wbs{s} \in S(m) \setminus S(m)_{\mathrm{re}} \).
\end{lemm}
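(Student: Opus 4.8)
The plan is to reduce to a single positivity normalization, factor $F(m)$ into a real part and an isotropic part, and then exploit that the paraboloid in \textbf{(A1)} is a \emph{graph over $V$} in order to pin down the isotropic factor by comparison with the Weyl-Kac formula. First I would fix a single $\wbsn \in \wV$ with $p_1(\wbsn)>0$ via \textbf{Lemma \ref{lemm:Atkn}} (with $S=S(m)$ and $\wbs{p}=\bs{0}_{\wV}$), so that $\iprd{\wbs{s}}{\wbsn}\neq 0$ for every $\wbs{s}\in S(m)$. By the third bullet of \textbf{(A)}, only finitely many $\wbs{s}\in S(m)$ satisfy $\iprd{\wbs{s}}{\wbsn}\leqq 0$, so finitely many applications of \textbf{Lemma \ref{lemm:Ar2s}} to isotropic vectors arrange $\iprd{\wbs{s}}{\wbsn}>0$ for all $\wbs{s}\in S(m)$. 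These moves leave $S(m)_{\mathrm{re}}$ (hence $\wR$ and $\mult$) untouched, replace each isotropic pair by the single value $m(\wbs{s})+m(-\wbs{s})$ on the positive side, and preserve \textbf{(A1)}. Together with \textbf{Lemma \ref{lemm:Ared}} this identifies $S(m)_{\mathrm{re}}=\wR^+$ with all real multiplicities equal to $1$, so it suffices to prove that, in this normalized situation, the isotropic multiplicity equals $\mult(\wbs{s})$. Setting $P:=\prod_{\wbsa\in\wR^+}(1-e^{\wbsa})$ and $G:=\prod_{\wbs{s}}(1-e^{\wbs{s}})^{m(\wbs{s})}$, the latter product running over the positive isotropic $\wbs{s}\in S(m)$, we have $F(m)=P\cdot G$.

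The geometric heart is that, because the affine norm sees only $p_2$, i.e. $\iinrm{\wbsl-\wbs{c}}=\iinrm{p_2(\wbsl)-p_2(\wbs{c})}$, the paraboloid of \textbf{(A1)} is the graph of $\wbsl\mapsto p_1(\wbsl)$ over $V$. Hence $p_2$ is injective on $\Lambda(m)$: each fibre $p_2^{-1}(\bs{v})\cap\Lambda(m)$ has at most one point. Since $P$ and $G$ both have constant term $1$ and all of $S(m)$ is positive, $\bs{0}_{\wV}\in\Lambda(m)$ with coefficient $1$, so the fibre of $F(m)$ over $\bs{0}_V$ is exactly $e^{\bs{0}_{\wV}}$. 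On the other hand, multiplying by $G$ fixes $p_2$, so a term $e^{\beta+\gamma}$ of $PG$ lands over $\bs{0}_V$ only when $\beta$ is isotropic; collecting these gives $P_0\cdot G$, where $P_0$ denotes the sum of the terms of $P$ supported on $\wV_0$. As $P_0=1+(\text{terms with }p_1>0)$ is invertible in the completed group ring, we conclude $P_0 G=e^{\bs{0}_{\wV}}$, that is $G=P_0^{-1}$.

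I would then run the identical argument on the Weyl-Kac denominator formula. With $H:=\prod_{\wbsa\in\wR^+_{\mathrm{im}}}(1-e^{\wbsa})^{\mult(\wbsa)}$ it reads $P\cdot H=\sum_{w\in W(\wR)}(\det w)e^{s(w)}$, whose support lies on a paraboloid by \cite{Ma1}*{Corollary 7.6} and is therefore again a graph over $V$. Its fibre over $\bs{0}_V$ is once more $e^{\bs{0}_{\wV}}$, giving $P_0 H=e^{\bs{0}_{\wV}}$ and $H=P_0^{-1}$, whence $G=H$. Comparing these two products over the one-dimensional isotropic line $\wV_0$, ordered by increasing $p_1$ (equivalently, matching the coefficients of $\log G=\log H$), forces $m(\wbs{s})=\mult(\wbs{s})$ for $\wbs{s}\in\wR^+_{\mathrm{im}}$ and $m(\wbs{s})=0$ for every other positive isotropic $\wbs{s}$. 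Undoing the normalization restores $m(\wbs{s})+m(-\wbs{s})=\mult(\wbs{s})$ for the original $m$ and all $\wbs{s}\in S(m)\setminus S(m)_{\mathrm{re}}$.

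The main obstacle I anticipate is not conceptual but a matter of bookkeeping: justifying that $P_0$, $G$, $H$ are genuine elements of a completed group ring in which $P_0$ is invertible (this is exactly where the local finiteness in the third bullet of \textbf{(A)} is essential), and verifying that the isotropic content of $F(m)$ really factors as the stated product of $(1-e^{\wbs{s}})$-type terms so that the concluding unique-factorization step applies. The single clean idea that makes everything collapse is that both $F(m)$ and the Weyl-Kac side are graphs over $V$, so their entire isotropic content is squeezed into the one fibre over $\bs{0}_V$, where it must coincide with $P_0^{-1}$.
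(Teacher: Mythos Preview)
Your approach is essentially the same as the paper's: normalize so that $S(m)$ lies in a positive half-space, then compare the isotropic factor of $F(m)$ with that of the Weyl--Kac formula using that both $\Lambda(m)$ and $\Lambda(m_0)$ sit on paraboloids. The paper simply cross-multiplies to obtain
\[
F(m)\prod_{\wbsa\in\wR^+_{\mathrm{im}}}(1-e^{\wbsa})^{\mult(\wbsa)}
= F(m_0)\prod_{\wbs{s}\in S(m)\setminus S(m)_{\mathrm{re}}}(1-e^{\wbs{s}})^{m(\wbs{s})}
\]
and then asserts that the two ``paraboloid $\times$ isotropic-line'' factorizations must agree; your $P_0G=1=P_0H$ computation via the fibre over $\bs{0}_V$ is exactly the mechanism that justifies this ``Therefore,'' so in that sense your write-up is more explicit than the paper's.

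There is one genuine slip in your normalization. Applying \textbf{Lemma~\ref{lemm:Ar2s}} only to isotropic vectors cannot force $\iprd{\wbs{s}}{\wbsn}>0$ for the \emph{real} $\wbs{s}\in S(m)$, so it does not yield $S(m)_{\mathrm{re}}=\wR^+$; without that, the factorization $F(m)=P\cdot G$ with $P=\prod_{\wbsa\in\wR^+}(1-e^{\wbsa})$ fails. The fix is immediate: also apply \textbf{Lemma~\ref{lemm:Ar2s}} to the finitely many real $\wbs{s}\in S(m)$ with $\iprd{\wbs{s}}{\wbsn}<0$. These real moves change $S(m)_{\mathrm{re}}$ (by swapping some $\wbsa$ for $-\wbsa$) but leave $\wR$, $\mult$, and all isotropic values $m(\wbs{s})+m(-\wbs{s})$ untouched, so the rest of your argument and its conclusion go through unchanged.
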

\begin{proof}
We take a nonzero vector \( \wbsn \in \wV \) according
to {\bf{Lemma \ref{lemm:Atkn}}} with \( S= \wR \) and \( \wbs{p}
= \bs{0}_{\wV} \).
We may assume \( S(m) \subset \{ \; \wbs{v} \in \wV \; | \;
\iprd{\wbs{v}}{\wbsn} >0 \; \} \) by
the third condition of {\bf{(A)}} and by {\bf{Lemma \ref{lemm:Ar2s}}}.
Then we may consider \( S(m)_{\mathrm{re}} = \wR^+ \) is
the set of all positive roots of \( \wR \) with respect to \( \wbsn \).
We define a map \( m_0 \in \map ( \wV, \NNI ) \) by
\[
 \left\{ \begin{array}{ll}
 m_0 ( \wbs{s} )= 1 & ( \; \wbs{s} \in \wR^+ \; ) \\
 m_0 ( \wbs{s} )= \mult ( \wbs{s} ) &
 ( \; \wbs{s} \in \wR^+_{\mathrm{im}} \; )
 \end{array} \right.
 .
\]
Namely, \( F( m_0 ) \) is the left-hand side of
the Weyl-Kac denominator formula of \( \wR \).
We show \( m= m_0 \).
Because \( m ( \wbs{s} )= m_0 ( \wbs{s} )=1 \) holds
for \( \wbs{s} \in S(m)_{\mathrm{re}} = \wR^+ \),
we have
\[
 F(m)
 \left( \prod_{\wbsa \in \wR^+_{\mathrm{im}}}
 \left( 1- e^{\wbsa} \right)^{\mult ( \wbsa )} \right)
 =
 F( m_0 )
 \left( \prod_{\wbs{s} \in S(m) \setminus S(m)_{\mathrm{re}}}
 \left( 1- e^{\wbs{s}} \right)^{m( \wbs{s} )} \right)
 .
\]
On the left-hand side, \( \Lambda (m) \) is a subset of a paraboloid
and \( \wR^+_{\mathrm{im}} \) is on the line \( \wV_0 \).
On the right-hand side, \( \Lambda ( m_0 ) \) is a subset of a paraboloid
and \( S(m) \setminus S(m)_{\mathrm{re}} \) is on the line \( \wV_0 \).
Therefore we have \( F(m)=F( m_0 ) \), \( \wR^+_{\mathrm{im}} =
S(m) \setminus S(m)_{\mathrm{re}} \) and \( m ( \wbs{s} )=
\mult ( \wbs{s} ) \) for any \( \wbs{s} \in \wR^+_{\mathrm{im}} \).
\end{proof}
Now we completes the proof of {\bf{(A1)}}\( \Rightarrow \){\bf{(A2)}}.

\section*{Acknowledgement}

The authors would like to thank
Professor Kyoji Saito for helpful discussions.
Also the authors are grateful to
Professors
Noriyuki Abe,
Alin Bostan,
Christopher H.~Chiu,
Jan Draisma,
Herwig Hauser,
Atsushi Matsuo,
Yoshihisa Saito
and Masahiko Yoshinaga
for useful advice.
This work is supported by
JSPS KAKENHI Grant Numbers 19K03429, 20K03546, 23K03039 and
24K06656 as well as by Nitto foundation.

\begin{bibdiv}
\begin{biblist}

\bib{Ha1}{book}{
   author={Hall, Brian},
   title={Lie groups, Lie algebras, and representations},
   series={Graduate Texts in Mathematics},
   volume={222},
   edition={2},
   note={An elementary introduction},
   publisher={Springer, Cham},
   date={2015},
   pages={xiv+449},
   isbn={978-3-319-13466-6},
   isbn={978-3-319-13467-3},
   review={\MR{3331229}},
   doi={10.1007/978-3-319-13467-3},
}

\bib{Hu1}{book}{
   author={Humphreys, James E.},
   title={Introduction to Lie algebras and representation theory},
   series={Graduate Texts in Mathematics},
   volume={9},
   note={Second printing, revised},
   publisher={Springer-Verlag, New York-Berlin},
   date={1978},
   pages={xii+171},
   isbn={0-387-90053-5},
   review={\MR{0499562}},
}

\bib{Ka1}{book}{
   author={Kac, Victor G.},
   title={Infinite-dimensional Lie algebras},
   series={Progress in Mathematics},
   volume={44},
   note={An introduction},
   publisher={Birkh\"auser Boston, Inc., Boston, MA},
   date={1983},
   pages={xvi+245},
   isbn={0-8176-3118-6},
   review={\MR{0739850}},
   doi={10.1007/978-1-4757-1382-4},
}

\bib{Ma1}{article}{
   author={Macdonald, I. G.},
   title={Affine root systems and Dedekind's $\eta $-function},
   journal={Invent. Math.},
   volume={15},
   date={1972},
   pages={91--143},
   issn={0020-9910},
   review={\MR{0357528}},
   doi={10.1007/BF01418931},
}

\bib{Sa1}{article}{
   author={Saito, Kyoji},
   title={Extended affine root systems. I. Coxeter transformations},
   journal={Publ. Res. Inst. Math. Sci.},
   volume={21},
   date={1985},
   number={1},
   pages={75--179},
   issn={0034-5318},
   review={\MR{0780892}},
   doi={10.2977/prims/1195179841},
}

\bib{St1}{book}{
   author={Stanley, Richard P.},
   title={Enumerative combinatorics. Vol. 1},
   series={Cambridge Studies in Advanced Mathematics},
   volume={49},
   note={With a foreword by Gian-Carlo Rota;
   Corrected reprint of the 1986 original},
   publisher={Cambridge University Press, Cambridge},
   date={1997},
   pages={xii+325},
   isbn={0-521-55309-1},
   isbn={0-521-66351-2},
   review={\MR{1442260}},
   doi={10.1017/CBO9780511805967},
}

\end{biblist}
\end{bibdiv}

\end{document}